\documentclass[12pt,a4paper]{article}
\usepackage[utf8]{inputenc}
\usepackage{amsmath, amsthm}
\usepackage{amsfonts}
\usepackage{amssymb}
\usepackage{pdflscape}
\usepackage{rotating}
\usepackage{makeidx}
\usepackage{graphicx, hyperref}

\newtheorem{corollary}{Corollary}[section]
\newcommand{\Mod}[1]{\ (\mathrm{\it{mod}}\ #1)}
\newtheorem{theorem}{Theorem}[section]
\usepackage[width=150.00mm, height=186.00mm]{geometry}
\author{Ratan Lal and Vipul Kakkar\\ \textit{email: vermarattan789@gmail.com, vplkakkar@gmail.com}}
\title{Automorphism groups of groups of order $p^{2}q^{2}$}

\begin{document}
		\maketitle
	\begin{abstract}
		In this paper, we have computed the automorphism groups of all groups of order $p^{2}q^{2}$, where $p$ and $q$ are distinct primes.
	\end{abstract}
\textit{Keywords.} Automorphism group, finite groups.\\
\textbf{MSC(2020):} 20D45 
	\section{Introduction}
	A. S. Hadi, M. Ghorbani and F. N. Larki \cite{gpp2q2} classified the groups of order $p^{2}q^{2}$ upto isomorphism, where $p$ and $q$ are distinct primes. They showed that any group of order $p^{2}q^{2}$ is either a direct product of groups or a semidirect product of groups. Our aim in this paper is to find the automorphism group of all groups of order $p^{2}q^{2}$, where $p$ and $q$ are distinct primes. 
	
	Bidwell et. al. \cite{bcm2006} proved that if $H$ and $K$ are finite groups with no common direct factors and $G = H\times K$, then the automorphism group $Aut(G)$ of $G$ can be expressed as the $2\times 2$ matrices of maps satisfying some conditions (see \cite[Theorem 3.6, p. 487]{bcm2006}). Also, Bidwell and Curran \cite{bdw2006} proved that if $H$ and $K$ are finite group groups with a group homomorphism $\phi : K\longrightarrow Aut(H)$ and $G = H\rtimes_{\phi} K$ is the semidirect product of $H$ and $K$, then the automorphism group $Aut(G)$ of $G$ is in one to one correspondence with the group of $2\times 2$ matrices of maps satisfying some conditions (see \cite[Lemma 2.1, p. 489]{bdw2006}).  Recently, the automorphism groups of all groups of order $p^{2}q$ are obtained by Campedel et. al. \cite{p2q}. So, it motivates us to study the automorphism groups of all groups of order $p^{2}q^{2}$.
	
	 We will use the results mentioned in the previous paragraph in order to achieve our goal. In the second section, we have recalled some results which will be useful for us in achieving our goal of this paper. In the third section, we have listed all groups of order $p^{2}q^{2}$ upto isomorphism, where $p$ and $q$ are distinct primes. In the last section, we have found the structure of the automorphism group of all groups of order $p^{2}q^{2}$. 
	
	Throughout the paper, we will identify the internal direct product $G = HK$ (semidirect product $G = HK$) with the external direct product $G = H\times K$ (semidirect product $G = H\rtimes_{\phi} K$), where the group homomorphism $\phi: K \longrightarrow Aut(H)$ is defined by $\phi(k)(h) = h^{k} = k^{-1}hk$, for all $h\in H$ and $k\in K$. The commutator of two elements $x,y \in G$ is defined as $[x,y] = x^{-1}y^{-1}xy$ and the conjugate of an element $x$ is defined as $x^{y} = y^{-1}xy$. $\mathbb{Z}_{n},S_{n}, A_{n}$ and $D_{n}$ will denote the cyclic group of order $n$, symmetric group on $n$ symbols, alternating group on $n$ symbols and dihedral group of order $2n$ respectively. $Q_{8}$ is the quaternion group of order 8. $[\cdot]$ denotes the greatest integer function.
	
	\section{Preliminaries}
	In this section, we will recall some result about the automorphism group of direct product of groups and semidirect product of groups.
	
	\begin{theorem}\cite[Theorem 3.6, p. 487]{bcm2006}\label{dp}
		Let $G = H\times K$, where $H$ and $K$ have no common direct factor, and let 
		\[\begin{matrix}
		A =	\left\{\begin{pmatrix}
		\alpha & 0\\
		0 & 1
		\end{pmatrix} \mid \alpha \in Aut(H)\right\}, & B = \left\{\begin{pmatrix}
		1 & \beta\\
		0 & 1
		\end{pmatrix} \mid \beta \in Hom(K, Z(H))\right\},\\
		C = \left\{\begin{pmatrix}
		1 & 0\\
		\gamma & 1
		\end{pmatrix} \mid \gamma \in Hom(H, Z(K))\right\}, & D = \left\{\begin{pmatrix}
		1 & 0\\
		0 & \delta
		\end{pmatrix} \mid \delta \in Aut(K)\right\}.
		\end{matrix}\]
		
		Then $A, B, C, D$ are subgroups of $Aut(G)$ and $Aut(G) = ABCD$, where $AD = A\times D$ normalizes $B$ and $C$.
	\end{theorem}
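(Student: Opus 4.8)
The plan is to represent automorphisms of $G=H\times K$ by $2\times 2$ matrices of homomorphisms. Write elements of $G$ as pairs $(h,k)$, let $\pi_H,\pi_K$ be the projections of $G$ onto $H,K$ (homomorphisms, since $G=H\times K$) and $\iota_H,\iota_K$ the inclusions, and to $\theta\in Aut(G)$ assign $\alpha=\pi_H\theta\iota_H$, $\beta=\pi_H\theta\iota_K$, $\gamma=\pi_K\theta\iota_H$, $\delta=\pi_K\theta\iota_K$. These are homomorphisms, the matrix $\left(\begin{smallmatrix}\alpha&\beta\\\gamma&\delta\end{smallmatrix}\right)$ determines $\theta$ (as $H,K$ generate $G$), it acts by $\left(\begin{smallmatrix}\alpha&\beta\\\gamma&\delta\end{smallmatrix}\right)(h,k)=(\alpha(h)\beta(k),\gamma(h)\delta(k))$, and composition of automorphisms corresponds to the evident matrix multiplication — the reorderings needed being licensed by the fact that, since $\theta(H\times1)$ centralises $\theta(1\times K)$, $\alpha(H)$ centralises $\beta(K)$ and $\gamma(H)$ centralises $\delta(K)$. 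Under this dictionary, $A,B,C,D$ are particular sets of such matrices.

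I would first settle the easy assertions. The maps $\alpha\mapsto\left(\begin{smallmatrix}\alpha&0\\0&1\end{smallmatrix}\right)$ and $\delta\mapsto\left(\begin{smallmatrix}1&0\\0&\delta\end{smallmatrix}\right)$ are isomorphisms of $Aut(H),Aut(K)$ onto $A,D$; the condition $\beta(K)\subseteq Z(H)$ is exactly what makes $(h,k)\mapsto(h\beta(k),k)$ a homomorphism, which is then bijective with inverse $k\mapsto\beta(k)^{-1}$, and as $Z(H)$ is abelian composition corresponds to pointwise product, so $B\cong Hom(K,Z(H))$ and symmetrically $C\cong Hom(H,Z(K))$. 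Since $A,D$ commute elementwise and meet trivially, $AD=A\times D$. A one-line computation gives $\left(\begin{smallmatrix}\alpha&0\\0&\delta\end{smallmatrix}\right)\left(\begin{smallmatrix}1&\beta\\0&1\end{smallmatrix}\right)\left(\begin{smallmatrix}\alpha^{-1}&0\\0&\delta^{-1}\end{smallmatrix}\right)=\left(\begin{smallmatrix}1&\alpha\beta\delta^{-1}\\0&1\end{smallmatrix}\right)$, with $\alpha\beta\delta^{-1}=\alpha\circ\beta\circ\delta^{-1}\in Hom(K,Z(H))$ since $\alpha$ restricts to an automorphism of $Z(H)$; similarly for $C$. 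Hence $AD$ normalises $B$ and $C$, and since each of $A,B,C,D$ consists of automorphisms, $ABCD\subseteq Aut(G)$.

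The substance is the inclusion $Aut(G)\subseteq ABCD$. Fix $\theta\in Aut(G)$ with matrix $\left(\begin{smallmatrix}\alpha&\beta\\\gamma&\delta\end{smallmatrix}\right)$. Because $\theta$ is surjective and $[H\times1,\,1\times K]=1$, one gets $H=\alpha(H)\beta(K)$ with $\alpha(H)$ centralising $\beta(K)$, and symmetrically $K=\gamma(H)\delta(K)$. The key lemma is that \emph{when $H$ and $K$ have no common direct factor, $\alpha\in Aut(H)$ and $\delta\in Aut(K)$}; granting it, $\alpha(H)=H$ centralises $\beta(K)$, so $\beta(K)\subseteq Z(H)$, and likewise $\gamma(H)\subseteq Z(K)$, so $\beta,\gamma$ have exactly the shape required for $B,C$. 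I would obtain the key lemma from the Krull--Schmidt and Remak theory of direct decompositions: writing $H,K$ as products of directly indecomposable direct factors, the hypothesis says no factor of $H$ is isomorphic to a factor of $K$, so comparing the indecomposable decompositions $G=\prod_iH_i\times\prod_jK_j$ and $G=\prod_i\theta(H_i)\times\prod_j\theta(K_j)$ via Remak's refinement shows that $\theta(H\times1)$ and $\theta(1\times K)$ coincide with $H\times1$ and $1\times K$ up to a central automorphism of $G$, which forces $\alpha,\delta$ to be bijective. That the hypothesis cannot be dropped is visible from the coordinate-swap of $\mathbb{Z}_2\times\mathbb{Z}_2$, for which $\alpha$ is the trivial map. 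I expect this lemma to be the main obstacle; the rest is bookkeeping.

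Granting $\alpha\in Aut(H)$, $\delta\in Aut(K)$, $\beta(K)\subseteq Z(H)$, $\gamma(H)\subseteq Z(K)$, I would finish as follows. Replace $\theta$ by $\theta'=\left(\begin{smallmatrix}\alpha^{-1}&0\\0&\delta^{-1}\end{smallmatrix}\right)\theta$, so that $\theta=\left(\begin{smallmatrix}\alpha&0\\0&\delta\end{smallmatrix}\right)\theta'$ with $\left(\begin{smallmatrix}\alpha&0\\0&\delta\end{smallmatrix}\right)\in AD$ and $\theta'$ has matrix $\left(\begin{smallmatrix}1&\beta_1\\\gamma_1&1\end{smallmatrix}\right)$, where $\beta_1=\alpha^{-1}\circ\beta\colon K\to Z(H)$ and $\gamma_1=\delta^{-1}\circ\gamma\colon H\to Z(K)$. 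Put $\phi=\left(\begin{smallmatrix}1&\beta_1\\0&1\end{smallmatrix}\right)\left(\begin{smallmatrix}1&0\\\gamma_1&1\end{smallmatrix}\right)\in BC$, which sends $(h,k)$ to $(h\,\beta_1(\gamma_1(h))\,\beta_1(k),\ \gamma_1(h)k)$, and let $X=\phi^{-1}\theta'$. A direct computation gives $X(h,k)=\bigl(h\,\beta_1(\gamma_1(h))^{-1},\ \gamma_1(\beta_1(\gamma_1(h)))\,k\bigr)$, so $X$ has matrix $\left(\begin{smallmatrix}\epsilon&0\\\zeta&1\end{smallmatrix}\right)$ with $\epsilon(h)=h\,\beta_1(\gamma_1(h))^{-1}$ an endomorphism of $H$ (its correction term being central) and $\zeta=\gamma_1\circ\beta_1\circ\gamma_1\in Hom(H,Z(K))$. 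As $X\in Aut(G)$ is bijective, so is $\epsilon$, i.e. $\epsilon\in Aut(H)$, and then $X=\left(\begin{smallmatrix}1&0\\\zeta\epsilon^{-1}&1\end{smallmatrix}\right)\left(\begin{smallmatrix}\epsilon&0\\0&1\end{smallmatrix}\right)\in CA$. Thus $\theta\in(AD)(BC)(CA)$, and a routine rearrangement using the normalisation relations of the second step (together with $AA=A$ and $CC=C$) collapses this to $\theta\in ABCD$. Combined with $ABCD\subseteq Aut(G)$, this gives $Aut(G)=ABCD$.
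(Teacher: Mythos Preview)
The paper does not prove this theorem: it is quoted from \cite{bcm2006} as a black box and no argument is supplied. There is therefore nothing in the paper to compare your proposal against.

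For what it is worth, your outline is correct and is essentially the argument of the original source. The matrix formalism, the verification that $A,B,C,D$ are subgroups, and the normalisation computation
\[
\begin{pmatrix}\alpha&0\\0&\delta\end{pmatrix}\begin{pmatrix}1&\beta\\0&1\end{pmatrix}\begin{pmatrix}\alpha&0\\0&\delta\end{pmatrix}^{-1}=\begin{pmatrix}1&\alpha\beta\delta^{-1}\\0&1\end{pmatrix}
\]
are exactly as in \cite{bcm2006}. You have correctly located the one nontrivial step, namely that $\alpha\in Aut(H)$ and $\delta\in Aut(K)$, and that this is precisely where the hypothesis ``no common direct factor'' is used; your appeal to Krull--Schmidt (legitimate because $H$ and $K$ are finite in \cite{bcm2006}) is how Bidwell, Curran and McCaughan handle it. Your final factorisation --- reducing first to $\theta'\in Aut(G)$ with diagonal entries $1$, peeling off $\phi\in BC$, and then recognising the remainder $X$ as an element of $CA$ --- is a correct variant of their decomposition, and the rearrangement $(AD)(BC)(CA)\subseteq ABCD$ does follow from the normalisation relations together with $AA=A$, $CC=C$.
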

	
	\begin{theorem}\cite[Lemma 2.1, p.489]{bdw2006}\label{t3}
		Let $G = H \rtimes K$ be the semidirect product of groups $H$ and $K$ and let
		\[\mathcal{A} = \left\{\begin{pmatrix}
		\alpha & \beta \\
		\gamma & \delta
		\end{pmatrix} \mid \begin{matrix}
		\alpha\in Map(H, H), & \beta \in Map(K, H)\\
		\gamma \in Hom(H,K), & \delta\in Hom(K,K)
		\end{matrix}.\right\}\]
		where $\alpha, \beta, \gamma, \delta$ satisfies the following properties for all $h\in H$ and $k\in K$,
		\begin{itemize}
			\item[($i$)] $\alpha(hh^{\prime}) =
			\alpha(h)\alpha(h^{\prime})^{\gamma(h)}$,
			\item[($ii$)]  $\beta(kk^{\prime}) = \beta(k)\beta(k^{\prime})^{\delta(k)}$,
			\item[$(iii)$] $\gamma(h^{k}) = \gamma(h)^{\delta(k)}$,
			\item[$(iv)$] $\alpha(h^{k})\beta(k)^{\gamma(h^{k})} = \beta(k)\alpha(h)^{\delta(k)}$,
			\item[$(v)$] For any $h^{\prime}k^{\prime}\in G$, there is a unique $hk\in G$ such that $\alpha(h)\beta(k)^{\gamma(h)} = h^{\prime}$ and $\gamma(h)\delta(k) = k^{\prime}$.
		\end{itemize}
		Then there is a one to one correspondence between the automorphism group of $G$, $Aut(G)$ and $\mathcal{A}$ given by $\theta \leftrightarrow \begin{pmatrix}
		\alpha & \beta\\
		\gamma & \delta
		\end{pmatrix}$, where $\theta(h) = \alpha(h)\gamma(h)$ and $\theta(k) = \beta(k)\delta(k)$. Also, if $\theta^{\prime} = \begin{pmatrix}
		\alpha^{\prime} & \beta^{\prime}\\
		\gamma^{\prime} & \delta^{\prime}
		\end{pmatrix}$, then 
		\[\theta^{\prime}\theta = \begin{pmatrix}
		\alpha^{\prime}\alpha + {\beta^{\prime}\gamma}^{\gamma^{\prime}\alpha} & \alpha^{\prime}\beta + {\beta^{\prime}\delta}^{\gamma^{\prime}\beta}\\ \gamma^{\prime}\alpha + \delta^{\prime}\gamma & \gamma^{\prime}\beta + \delta^{\prime} \delta
		\end{pmatrix}. \]
	\end{theorem}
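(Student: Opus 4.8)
The plan is to realise the correspondence $Aut(G)\leftrightarrow\mathcal{A}$ explicitly in both directions and then to verify the composition rule by a direct computation in the normal form of $G$. The basic tool is that, since $H\trianglelefteq G$ and $G=HK$, every $g\in G$ has a unique expression $g=hk$ with $h\in H$ and $k\in K$; so the coordinate maps $p_H:G\to H$ and $p_K:G\to K$ defined by $p_H(hk)=h$, $p_K(hk)=k$ make sense, and $p_K$ is exactly the quotient map $G\to G/H$ followed by the canonical isomorphism $G/H\xrightarrow{\sim}K$. In particular $p_K$ is a homomorphism and $p_K|_K=\mathrm{id}_K$.

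Given $\theta\in Aut(G)$, I would set $\alpha=p_H\circ\theta|_H$, $\gamma=p_K\circ\theta|_H$, $\beta=p_H\circ\theta|_K$ and $\delta=p_K\circ\theta|_K$, so that $\theta(h)=\alpha(h)\gamma(h)$ and $\theta(k)=\beta(k)\delta(k)$. The type conditions built into $\mathcal{A}$ are then automatic: $\gamma\in Hom(H,K)$ and $\delta\in Hom(K,K)$ because they are composites of the homomorphism $p_K$ with the homomorphisms $\theta|_H$ and $\theta|_K$, whereas $\alpha$ and $\beta$ are only required to be maps. Property $(iii)$ drops out immediately, since $\gamma(h^{k})=p_K(\theta(k^{-1}hk))=p_K(\theta(k))^{-1}p_K(\theta(h))p_K(\theta(k))=\gamma(h)^{\delta(k)}$. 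For $(i)$, $(ii)$ and $(iv)$ I would rewrite the relevant words in normal form, using the commutation relations of the semidirect product to move every $K$-letter to the right of every $H$-letter: comparing $H$-components in $\theta(hh')=\theta(h)\theta(h')$ yields $(i)$, comparing $H$-components in $\theta(kk')=\theta(k)\theta(k')$ yields $(ii)$, and comparing $H$-components in $\theta(h^{k})=\theta(h)^{\theta(k)}$, after expanding $\theta(h)^{\theta(k)}=(\beta(k)\delta(k))^{-1}(\alpha(h)\gamma(h))(\beta(k)\delta(k))$ and returning it to normal form, yields $(iv)$ — the $K$-components already matching by $(iii)$. Finally, writing $\theta(hk)=\theta(h)\theta(k)$ in normal form displays $p_H(\theta(hk))$ and $p_K(\theta(hk))$ as precisely the two expressions occurring in $(v)$, so the bijectivity of $\theta$ is exactly clause $(v)$.

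For the reverse direction I would start from a matrix in $\mathcal{A}$, define $\theta$ on $H$ and on $K$ by the same formulas $\theta(h)=\alpha(h)\gamma(h)$ and $\theta(k)=\beta(k)\delta(k)$, and extend it to $G$ by $\theta(hk):=\theta(h)\theta(k)$, which is well defined by unique factorisation. Reducing both $\theta\bigl((h_1k_1)(h_2k_2)\bigr)$ and $\theta(h_1k_1)\theta(h_2k_2)$ to normal form and matching $H$- and $K$-components, the identities required for $\theta$ to be an endomorphism are precisely $(i)$--$(iv)$ (together with $\gamma$, $\delta$ being homomorphisms); then $(v)$ says this endomorphism is a bijection, hence lies in $Aut(G)$. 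The two assignments $\theta\mapsto\begin{pmatrix}\alpha&\beta\\\gamma&\delta\end{pmatrix}$ and back are visibly mutually inverse, which gives the one-to-one correspondence, with $\theta(h)=\alpha(h)\gamma(h)$ and $\theta(k)=\beta(k)\delta(k)$ as claimed. The product formula is obtained by applying $\theta'$ to $\theta(h)=\alpha(h)\gamma(h)$ and to $\theta(k)=\beta(k)\delta(k)$: since $\theta'$ is a homomorphism, $(\theta'\theta)(h)=\alpha'(\alpha(h))\,\gamma'(\alpha(h))\,\beta'(\gamma(h))\,\delta'(\gamma(h))$, whose normal form (obtained by pushing $\gamma'(\alpha(h))$ past $\beta'(\gamma(h))$) has $H$-part $\alpha'\alpha+{\beta'\gamma}^{\gamma'\alpha}$ and $K$-part $\gamma'\alpha+\delta'\gamma$; the same computation with input $k$ gives $\alpha'\beta+{\beta'\delta}^{\gamma'\beta}$ and $\gamma'\beta+\delta'\delta$, and reading the four outcomes off against $\theta'\theta\leftrightarrow\begin{pmatrix}\alpha''&\beta''\\\gamma''&\delta''\end{pmatrix}$ produces the displayed matrix.

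I expect the only genuine difficulty to be bookkeeping rather than anything conceptual: one has to keep the semidirect-product commutation conventions perfectly straight while normalising the long words produced by $\theta(h)\theta(k)$ and by $\theta'\theta$, and then check that the outcome matches $(i)$--$(v)$ and the entries of the product matrix verbatim. Everything else is formal.
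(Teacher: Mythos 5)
The paper offers no proof of this statement at all---it is quoted from Bidwell and Curran's Lemma 2.1---so the only comparison available is with that source, whose argument your sketch reproduces faithfully: define $\alpha,\beta,\gamma,\delta$ via the coordinate projections $p_H$, $p_K$ (with $p_K$ a homomorphism since $H\trianglelefteq G$), extract (i)--(iv) by comparing normal forms of $\theta(hh')$, $\theta(kk')$, $\theta(h^{k})$, identify (v) with bijectivity of $\theta$, reverse the construction for the converse, and obtain the product formula by applying $\theta'$ to $\theta(h)=\alpha(h)\gamma(h)$ and $\theta(k)=\beta(k)\delta(k)$. This is correct in outline; the one point demanding care is the conjugation convention---with this paper's $x^{y}=y^{-1}xy$, pushing a $K$-letter such as $\gamma(h)$ or $\gamma'(\alpha(h))$ leftward past an $H$-letter produces exponents $\gamma(h)^{-1}$ rather than $\gamma(h)$, so the displayed conditions and product matrix come out verbatim only under the convention of the original source---which is exactly the bookkeeping you flagged and does not affect the structure of the argument.
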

	
	\begin{corollary}\cite[corollary 2.2, p. 490]{bdw2006}
		Let $G = H\rtimes K$ be the semidirect product of $H$ and $K$, where $K$ is abelian. Then
		\begin{itemize}
			\item[$(i)$] $\gamma \in Hom(H/[H,K], K)$,
			\item[$(ii)$] for all $h\in H$ and $k\in K$, $\alpha(h^{k})\beta(k)^{\gamma(h)} = \beta(k)\alpha(h)^{\delta(k)}$. 
		\end{itemize}
	\end{corollary}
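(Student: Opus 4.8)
The plan is to derive both assertions directly from the defining relations $(i)$--$(v)$ of Theorem \ref{t3}, using only the extra hypothesis that $K$ is abelian. The crucial elementary observation is that conjugation inside an abelian group is trivial: for any $x \in K$ and any $k \in K$ one has $x^{\delta(k)} = x$, since $\delta(k) \in K$. Applying this to relation $(iii)$, namely $\gamma(h^{k}) = \gamma(h)^{\delta(k)}$, collapses it to
\[\gamma(h^{k}) = \gamma(h) \quad \text{for all } h \in H,\ k \in K.\]

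For part $(i)$, I would first recall from Theorem \ref{t3} that $\gamma \in Hom(H,K)$, so $\ker\gamma$ is a subgroup of $H$. Using that $\gamma$ is a homomorphism together with the displayed identity, compute
\[\gamma([h,k]) = \gamma(h^{-1}h^{k}) = \gamma(h)^{-1}\gamma(h^{k}) = \gamma(h)^{-1}\gamma(h) = 1,\]
so every generator $[h,k]$ of $[H,K]$ lies in $\ker\gamma$; hence $[H,K] \le \ker\gamma$. By the universal property of the quotient, $\gamma$ then factors through $H/[H,K]$, i.e. it induces a homomorphism $H/[H,K] \to K$, which is precisely the assertion $\gamma \in Hom(H/[H,K], K)$.

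For part $(ii)$, I would simply substitute $\gamma(h^{k}) = \gamma(h)$ into relation $(iv)$ of Theorem \ref{t3}, $\alpha(h^{k})\beta(k)^{\gamma(h^{k})} = \beta(k)\alpha(h)^{\delta(k)}$, to obtain $\alpha(h^{k})\beta(k)^{\gamma(h)} = \beta(k)\alpha(h)^{\delta(k)}$, as required.

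There is no serious obstacle: the entire content is the reduction of $(iii)$ via commutativity of $K$. The only point meriting a line of care is verifying that $\ker\gamma$ — a priori merely the kernel of a homomorphism between possibly nonabelian groups — contains the \emph{whole} commutator subgroup $[H,K]$ and not just the individual commutators; this is immediate once one notes that $\ker\gamma$ is a subgroup and $[H,K]$ is by definition generated by the elements $[h,k]$.
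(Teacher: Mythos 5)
Your argument is correct and is exactly the standard derivation: the paper itself only cites this corollary from Bidwell--Curran without reproducing a proof, and your reduction of relation $(iii)$ via commutativity of $K$ to $\gamma(h^{k})=\gamma(h)$, hence $\gamma([h,k])=\gamma(h^{-1}h^{k})=1$ so that $[H,K]\le\ker\gamma$ and $\gamma$ factors through $H/[H,K]$, followed by substitution into $(iv)$ for part $(ii)$, is precisely the intended argument.
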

	Let \begin{align*}
	P =& \{(\alpha,\delta) \in Aut(H)\times Aut(K)\mid \alpha(h^{k}) = \alpha(h)^{\delta(k)}, \forall h\in H, k\in K\},\\
	S =& \{\beta \in Map(K, H)\mid \beta(kk^{\prime}) = \beta(k)\beta(k^{\prime})^{k}, \forall k,k^{\prime}\in K\}
	\end{align*}
	and let the corresponding subsets of $\mathcal{A}$ be \\
	
	$R = \left\{\begin{pmatrix}
	\alpha & 0\\
	0 & \delta
	\end{pmatrix} \mid (\alpha,\delta) \in P\right\}$ and $Q = \left\{\begin{pmatrix}
	1 & \beta\\
	0 & 1
	\end{pmatrix} \mid \beta \in S\right\}.$
	\begin{theorem}\label{main}
		Let $G = H\rtimes K$ be the semidirect product of $H$ and $K$, where $K$ is abelian and $\begin{pmatrix}
		\alpha & \beta \\ \gamma & \delta
		\end{pmatrix} \in \mathcal{A}$. Then, if $\gamma = 0$, the trivial homomorphism, then $Aut(G)\simeq \mathcal{A} \simeq Q\rtimes R$.  
	\end{theorem}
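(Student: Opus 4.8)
The plan is to verify that, when $\gamma = 0$, the matrix conditions $(i)$–$(v)$ of Theorem \ref{t3} collapse exactly into the defining conditions of $Q$ and $R$, and that the multiplication formula restricts to a semidirect-product multiplication on $Q \rtimes R$. First I would observe that with $\gamma = 0$, condition $(i)$ reads $\alpha(hh') = \alpha(h)\alpha(h')$, so $\alpha \in \mathrm{Hom}(H,H)$; condition $(iii)$ becomes vacuous; condition $(iv)$ becomes $\alpha(h^k)\beta(k)^{\gamma(h^k)} = \beta(k)\alpha(h)^{\delta(k)}$, but $\gamma(h^k)=0$ (since $\gamma$ is trivial) also kills the conjugation superscript, leaving $\alpha(h^k)\beta(k) = \beta(k)\alpha(h)^{\delta(k)}$; and condition $(v)$ becomes: for every $h'k'$ there is a unique $hk$ with $\alpha(h)\beta(k) = h'$ and $\delta(k) = k'$. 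The latter, together with $\delta \in \mathrm{Hom}(K,K)$, forces $\delta$ to be bijective, hence $\delta \in Aut(K)$; then $k$ is determined as $k = \delta^{-1}(k')$, and uniqueness/existence of $h$ with $\alpha(h) = h'\beta(k)^{-1}$ forces $\alpha$ to be bijective, hence $\alpha \in Aut(H)$.

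Next I would split the analysis into the two building-block cases. Taking $\beta = 0$, condition $(iv)$ reduces to $\alpha(h^k) = \alpha(h)^{\delta(k)}$, which is precisely the compatibility condition defining $P$; so the diagonal matrices $\begin{pmatrix}\alpha & 0\\ 0 & \delta\end{pmatrix}$ with $(\alpha,\delta)\in P$ are exactly the elements of $R$, and a short check using the multiplication formula (with all $\gamma$-terms zero) shows $R$ is a subgroup isomorphic to $P$ with componentwise composition. Taking $\alpha = 1$ and $\delta = 1$, condition $(ii)$ reads $\beta(kk') = \beta(k)\beta(k')^{\delta(k)} = \beta(k)\beta(k')^{k}$, which is the defining condition of $S$; condition $(iv)$ then reads $h^k\beta(k) = \beta(k)h^{k}$... more carefully, $\alpha = 1$ gives $h^k \beta(k) = \beta(k) h^{k}$ — wait, I should instead note that for $\alpha=1,\delta=1$ condition $(iv)$ is $h^{k}\beta(k) = \beta(k)h$ in $H$, i.e. $\beta(k)^{-1}h^{k}\beta(k) = h$; this need not hold for arbitrary $\beta \in S$, so the cleaner route is to not impose $(iv)$ piecewise but to show the bijection $\mathcal{A} \to Q \times R$ (as sets) sending $\begin{pmatrix}\alpha & \beta\\ 0 & \delta\end{pmatrix}$ to the pair $\left(\begin{pmatrix}1 & \beta'\\ 0 & 1\end{pmatrix}, \begin{pmatrix}\alpha & 0\\ 0 & \delta\end{pmatrix}\right)$ for the appropriate $\beta'$, and then transport the group structure. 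Concretely, I would factor a general $\gamma=0$ matrix as $\begin{pmatrix}\alpha & \beta\\ 0 & \delta\end{pmatrix} = \begin{pmatrix}1 & \beta\alpha^{-1}\\ 0 & 1\end{pmatrix}\begin{pmatrix}\alpha & 0\\ 0 & \delta\end{pmatrix}$ using the multiplication rule of Theorem \ref{t3} with $\gamma=\gamma'=0$, check that $\beta\alpha^{-1}$ (as a map $K \to H$) indeed lies in $S$, and that $(\alpha,\delta)\in P$ — both of which follow by feeding the factorization back through conditions $(ii)$ and $(iv)$.

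Then I would establish that $Q$ is a normal subgroup of $\mathcal{A}$ and $R$ is a complement. Normality of $Q$: compute $\begin{pmatrix}\alpha & 0\\ 0 & \delta\end{pmatrix}\begin{pmatrix}1 & \beta\\ 0 & 1\end{pmatrix}\begin{pmatrix}\alpha & 0\\ 0 & \delta\end{pmatrix}^{-1}$ via the multiplication formula; all lower-left entries stay $0$ and the upper-left entries multiply out to $1$, leaving a matrix of the form $\begin{pmatrix}1 & \beta''\\ 0 & 1\end{pmatrix}$, where $\beta''(k) = \alpha(\beta(\delta^{-1}(k)))$; a direct verification shows $\beta'' \in S$ using $(\alpha,\delta)\in P$ and $\beta \in S$. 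Together with $Q \cap R = \left\{\begin{pmatrix}1 & 0\\ 0 & 1\end{pmatrix}\right\}$ (obvious from the matrix entries) and $\mathcal{A} = QR$ (the factorization above), this gives $\mathcal{A} = Q \rtimes R$, and hence $Aut(G) \simeq \mathcal{A} \simeq Q \rtimes R$ by Theorem \ref{t3}. The main obstacle I anticipate is the bookkeeping in condition $(iv)$: one must be careful that the "piecewise" subgroups $Q$ and $R$ really are subsets of $\mathcal{A}$ (i.e. that elements of $S$ and $P$, packaged as matrices, genuinely satisfy all five conditions with $\gamma=0$), and that the factorization $\beta = $ ($\beta'$-part)$\cdot\alpha$ is compatible with $(iv)$ — this is where the hypothesis that $K$ is abelian and the precise form of $S$ (with the superscript $k$, not $\delta(k)$) get used, so I would double-check the interplay between $(ii)$, $(iv)$, and the conjugation action there.
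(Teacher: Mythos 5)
Your proposal follows essentially the same route as the paper: factor each $\gamma=0$ matrix as an element of $Q$ times a diagonal element of $R$, verify by the multiplication formula that conjugation by $R$ sends $Q$ to $Q$ (your $\beta''(k)=\alpha(\beta(\delta^{-1}(k)))$ is exactly the paper's $\alpha\beta\delta^{-1}$), note $Q\cap R=\{1\}$, and conclude $\mathcal{A}\simeq Q\rtimes R$; your extra care in checking that the pieces satisfy conditions $(i)$--$(v)$ only makes the argument more complete. The one slip is the twist in your factorization: with the unipotent factor on the left it should be $\beta\delta^{-1}$ (not $\beta\alpha^{-1}$, which does not even make sense as a map $K\to H$), matching the paper's $\hat{\beta}=\alpha^{-1}\beta\delta^{-1}$ in its three-factor version and consistent with your own conjugation computation.
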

	\begin{proof}
		Let $\theta = \begin{pmatrix}
		\alpha & \beta \\ \gamma & \delta
		\end{pmatrix} \in \mathcal{A}$. Then, if $\gamma = 0$, then we have 
		\[\begin{pmatrix}
		\alpha & \beta \\ 0 & \delta
		\end{pmatrix} = \begin{pmatrix}
		\alpha & 0 \\ 0 & 1
		\end{pmatrix}\begin{pmatrix}
		1 & \hat{\beta} \\ 0 & 1
		\end{pmatrix}\begin{pmatrix}
		1 & 0 \\ 0 & \delta
		\end{pmatrix}\in ABD,\]
		where $\hat{\beta} = \alpha^{-1}\beta\delta^{-1}$. Also, for any $\begin{pmatrix}
		1 & \beta \\ 0 & 1
		\end{pmatrix} \in Q$ and $\begin{pmatrix}
		\alpha & 0 \\ 0 & \delta
		\end{pmatrix}\in P\times S$, we have 
		\[\begin{pmatrix}
		\alpha & 0 \\ 0 & \delta
		\end{pmatrix}\begin{pmatrix}
		1 & \beta \\ 0 & 1
		\end{pmatrix}\begin{pmatrix}
		\alpha & 0 \\ 0 & \delta
		\end{pmatrix}^{-1} = \begin{pmatrix}
		1 & \alpha\beta\delta^{-1} \\ 0 & \delta
		\end{pmatrix}\in Q.\]
		Thus, $Q \triangleleft P\times S$. Also, $Q\cap (P\times S) = \{1\}$. Therefore, $\mathcal{A} \simeq Q\rtimes (P\times S)$.
	\end{proof}
	
	We will identify the automorphisms of the group $G$ with the corresponding matrices in $\mathcal{A}$.
	\section{Groups of order $p^{2}q^{2}$, $p$ and $q$ are distinct primes}
 In \cite{gpp2q2}, the authors have classified the groups of order $p^{2}q^{2}$, where $p$ and $q$ are distinct primes, as follows.
 \begin{theorem}[Theorem 3.1, p. 91]\cite{gpp2q2}\label{t1}
 	Let $G$ be a group of order $p^{2}q^{2}$. Then $G$ is isomorphic to one of the following groups.
 	\begin{itemize}
 		\item[($i$)] If $pq=6$, then $G$ is isomorphic to
 		\begin{enumerate}
 			\item  $\mathbb{Z}_{36}$,
 			\item $\mathbb{Z}_{18}\times \mathbb{Z}_{2}$,
 			\item $\mathbb{Z}_{6}\times \mathbb{Z}_{6}$,
 			\item  $\mathbb{Z}_{12}\times \mathbb{Z}_{3}$,
 			\item $\mathbb{Z}_{6}\times D_{6} \simeq S_{3}\times \mathbb{Z}_{6} \simeq D_{12}\times \mathbb{Z}_{3}$,
 			\item  $D_{6}\times D_{6}\simeq S_{3}\times S_{3}$,
 		\item  $D_{18}\times \mathbb{Z}_{2}$,
 		\item  $A_{4}\times \mathbb{Z}_{3}$,
 		\item  $D_{36}$,
 		\item $H\times \mathbb{Z}_{3}$,
 		\item $K\times \mathbb{Z}_{2}$,
 		\item  $\langle a,b,c \mid a^{3} = b^{3} = c^{4} = [a,b] = 1, c^{-1}ac = b, c^{-1}bc = a^{-1}\rangle$,
 		\item  $\langle a,b,c \mid a^{2} = b^{2} = c^{9} = [a,b] = 1, c^{-1}ac = b, c^{-1}bc = ab\rangle$,
 		\item  $\langle a,b,c \mid a^{3} = b^{3} = c^{4} = [a,b] = 1, c^{-1}ac = a^{-1}, c^{-1}bc = b^{-1}\rangle$,
  		\end{enumerate}
where
$$H = \langle a, b \mid a^{4} = b^{3} = 1, a^{-1}ba = b^{-1}\rangle$$
and
$$K = \langle a, b, c, \mid a^{2} = b^{2} = c^{2} = (abc)^{2} = (ab)^{3} = (ac)^{3} = 1\rangle.$$
\item[($ii$)] If $pq\ne 6$, then $G$ is isomorphic to
\begin{enumerate}
	\item[15.] $\mathbb{Z}_{q^{2}}\times \mathbb{Z}_{p^{2}}$,
	\item[16.] $\mathbb{Z}_{q}\times \mathbb{Z}_{q} \times \mathbb{Z}_{p^{2}}$,
	\item[17.]  $\mathbb{Z}_{q^{2}}\times \mathbb{Z}_{p} \times \mathbb{Z}_{p}$,
	\item[18.]  $\mathbb{Z}_{q}\times \mathbb{Z}_{q} \times \mathbb{Z}_{p}\times \mathbb{Z}_{p}$, 
	\item[19.] $\langle a,b \mid a^{q^{2}} = b^{p^{2}} = 1, a^{-1}ba = b^{r}, r^{q} \equiv 1 \Mod{p^{2}}$, where $q$ divides $p-1$,
	\item[20.]  $\langle a,b \mid a^{q^{2}} = b^{p^{2}} = 1, a^{-1}ba = b^{r}, r^{q^{2}} \equiv 1 \Mod{p^{2}}$, where $q^{2}$ divides $p-1$,
	\item[21.]  $\langle a,b,c \mid a^{q} = b^{q} = c^{p^{2}} = 1, [a,b] = [b,c] = 1, a^{-1}ca = c^{r}, r^{q} \equiv 1 \Mod{p^{2}}$, where $q$ divides $p-1$,
	\item[22.]  $\langle a,b,c \mid   a^{q^{2}} = b^{p} = c^{p} = [b,c] = 1, a^{-1}ba = b^{r}, a^{-1}ca = c^{r}, r^{q} \equiv 1 \Mod{p}$, where $q$ divides $p-1$,
	\item[23.]  $\langle a,b,c \mid a^{4} = b^{p} = c^{p} = [a,b] = [b,c] =1, a^{-1}ca = c^{-1}\rangle$,
	\item[24.] $\langle a,b,c \mid a^{q^{2}} = b^{p} = c^{p} = [a,b] = [b,c] = 1, a^{-1}ca = c^{r}, r^{q} \equiv 1 \Mod{p}$, where $q\ne 2$ divides $p-1$,
	\item[25.] $\langle a,b,c \mid a^{q^{2}} = b^{p} = c^{p} = [b,c] = 1, a^{-1}ba = b^{r}, a^{-1}ca = c^{r^{-1}}, r^{q} \equiv 1 \Mod{p}$, where $q\ne 2$ divides $p-1$,
	\item[26.] $\langle a,b,c \mid a^{q^{2}} = b^{p} = c^{p} = [b,c] = 1, a^{-1}ba = b^{r}, a^{-1}ca = c^{r^{-1}}, r^{q^{2}} \equiv 1 \Mod{p}$, where $q^{2}$ divides $p-1$,
	\item[27.] $\langle a,b,c \mid a^{q^{2}} = b^{p} = c^{p} = [b,c] = 1, a^{-1}ba = b^{r}, a^{-1}ca = c^{r^{n}}, r^{q} \equiv 1 \Mod{p}$, where $q\ne 2$ divides $p-1$ and $n\in \{2,3, \cdots, \frac{q-1}{2}\}$,
		\item[28.] $\langle a,b,c \mid a^{q^{2}} = b^{p} = c^{p} = [b,c] = 1, a^{-1}ba = b^{r}, a^{-1}ca = c^{r^{n}}, r^{q^{2}} \equiv 1 \Mod{p}$, where $q^{2}$ divides $p-1$ and $2 \le n \le \frac{q^{2}-1}{2}$ or $ n = mq (m \ge \frac{q+1}{2})$,
		\item[29.] $\langle a,b,c \mid a^{q^{2}} = b^{p} = c^{p} = [b,c] = 1, a^{-1}ba = b^{r}, a^{-1}ca = c^{r}, r^{q^{2}} \equiv 1 \Mod{p}$, where $q^{2}$ divides $p-1$,		
	\item[30.] $\langle a,b,c \mid a^{q^{2}} = b^{p} = c^{p} = [b,c] = 1, a^{-1}ba = b^{m}c^{n D}, a^{-1}ca = b^{n}c^{m}$, where $m + n\sqrt{D} = \sigma^{\frac{p^{2}-1}{q}}$, $\sigma$ is a primitive root of Galois field $GF(p^{2})$, $m, n, D\in GF(p)$, $n \ne 0$, $D$ is not a perfect square and $q$ divides $p+1$,
	\item[31.]  $\langle a,b,c \mid a^{4} = b^{p} = c^{p} = [b,c] = 1, a^{-1}ba = b^{m}c^{nD}, a^{-1}ca = b^{n}c^{m}$, where $m + n\sqrt{D} = \sigma^{\frac{p^{2}-1}{4}}$, $\sigma$ is a primitive root of Galois field $GF(p^{2})$, $\alpha, \beta, D\in GF(p)$, $\beta \ne 0$, $D$ is not a perfect square and $p\equiv 3 \Mod{4}$,
	\item[32.] $\langle a,b,c \mid a^{q^{2}} = b^{p} = c^{p} = [b,c] = 1, a^{-1}ba = b^{m}c^{nD}, a^{-1}ca = b^{n}c^{m}$, where $m + n\sqrt{D} = \sigma^{\frac{p^{2}-1}{q^{2}}}$, $\sigma$ is a primitive root of Galois field $GF(p^{2})$, $m, n, D\in GF(p)$, $\beta \ne 0$, $D$ is not a perfect square and $q^{2}$ divides $p+1$,
	\item[33.] $\langle a,b,c,d \mid a^{q} = b^{q} = c^{p} = d^{p} = [a,c] = [a,d] = [a,b] = [c,d] = 1, b^{-1}cb = c^{r}, b^{-1}db = d^{r}, r^{q} \equiv 1 \Mod{p}$, where $q$ divides $p-1$,
	\item[34.] $\langle a,b,c,d \mid a^{2} = b^{2} = c^{p} = d^{p} = [a,b] = [a,c] = [a,d] = [b,d] = [c,d] = 1, b^{-1}cb = c^{-1}\rangle$,
	\item[35.] $\langle a,b,c,d \mid a^{q} = b^{q} = c^{p} = d^{p} = [a,b] = [a,d] = [b,c] = [c,d] = 1, a^{-1}ca = c^{r}, b^{-1}db = d^{r}, r^{q} \equiv 1 \Mod{p}$, where $q$ divides $p-1$,
	\item[36.] $\langle a,b,c,d \mid a^{q} = b^{q} = c^{p} = d^{p} = [a,b] = [a,d] = [a,c] = [c,d] = 1, b^{-1}cb = c^{u}d^{vD}, b^{-1}db = c^{v}d^{u}$, where $u + v\sqrt{D} = \sigma^{\frac{p^{2}-1}{q}}$, $\sigma$ is a primitive root of Galois field $GF(p^{2})$, $u, v, D\in GF(p)$, $v \ne 0$, $D$ is not a perfect square, $q$ divides $p+1$ and $p\not\equiv 1 \Mod{q}$.
	
\end{enumerate}
 	\end{itemize}
 \end{theorem}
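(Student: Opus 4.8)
The plan is to run a Sylow analysis and reduce the problem to classifying semidirect products. Write $|G| = p^{2}q^{2}$ and let $P$ and $Q$ be a Sylow $p$-subgroup and a Sylow $q$-subgroup. Since every group of order $p^{2}$ is abelian, $P \in \{\mathbb{Z}_{p^{2}}, \mathbb{Z}_{p}\times \mathbb{Z}_{p}\}$ and $Q \in \{\mathbb{Z}_{q^{2}}, \mathbb{Z}_{q}\times \mathbb{Z}_{q}\}$. The first step is to establish normality. Relabelling so that $p > q$, Sylow's theorems give $n_{p}\in \{1, q, q^{2}\}$ with $n_{p}\equiv 1 \pmod{p}$; since $p > q$ forces $p \nmid q-1$, the value $n_{p}=q$ is impossible, and $n_{p}=q^{2}$ forces $p \mid q+1$, hence $p = q+1$ and $\{p,q\}=\{2,3\}$. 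Thus whenever $pq \neq 6$ the Sylow subgroup of the larger prime is normal, and $G = P\rtimes_{\phi} Q$ for some $\phi : Q \to Aut(P)$.

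Next I would organize the classification by the homomorphism $\phi$. Two data $(P,Q,\phi)$ and $(P,Q,\phi')$ give isomorphic groups precisely when $\phi'$ is obtained from $\phi$ by precomposing with an automorphism of $Q$ and postcomposing with conjugation by $Aut(P)$, so the isomorphism types correspond to orbits of homomorphisms under $Aut(Q)\times Aut(P)$. When $\phi$ is trivial, $G = P\times Q$ is abelian, and the four choices of $(P,Q)$ recover items 15--18 (and, for $pq=6$, items 1--4) by the structure theorem for finite abelian groups. The non-abelian groups then come from the nontrivial orbits.

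The technical core is the subgroup structure of $Aut(P)$. Here $Aut(\mathbb{Z}_{p^{2}}) \cong \mathbb{Z}_{p(p-1)}$ is cyclic, so a nontrivial image of $q$-power order exists exactly when $q \mid p-1$ (image of order $q$, yielding the presentations of items 19 and 21) or $q^{2}\mid p-1$ (image of order $q^{2}$, item 20), the action being conjugation $b \mapsto b^{r}$ by an element $r$ of the appropriate multiplicative order. The richer case is $Aut(\mathbb{Z}_{p}\times \mathbb{Z}_{p}) \cong GL_{2}(\mathbb{F}_{p})$, of order $p(p-1)^{2}(p+1)$. An element of order $q$ acting on $\mathbb{Z}_{p}\times \mathbb{Z}_{p}$ is either diagonalizable over $\mathbb{F}_{p}$---the \emph{split} case, requiring $q \mid p-1$, which produces the diagonal presentations $a^{-1}ba = b^{r}$, $a^{-1}ca = c^{r^{n}}$ with the exponents $n$ distinguishing non-conjugate actions (items 22, 24--29, 33, 35)---or it acts irreducibly, the \emph{non-split} case requiring $q \mid p+1$. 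In the non-split case I would identify $\mathbb{Z}_{p}\times \mathbb{Z}_{p}$ with the additive group of $GF(p^{2})$ and the automorphism with multiplication by $\sigma^{(p^{2}-1)/q}$, whose matrix in a basis $\{1,\sqrt{D}\}$ is $\begin{pmatrix} m & nD \\ n & m \end{pmatrix}$; this yields exactly the Galois-field presentations (items 30--32, 36). The exponent restrictions (such as $n \in \{2,\dots,(q-1)/2\}$) record one representative per $Aut(Q)$-orbit of injections.

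I expect the two hardest points to be the following. First, completeness and non-redundancy in the $GL_{2}(\mathbb{F}_{p})$ analysis: one must show every conjugacy class of cyclic $q$- or $q^{2}$-subgroup is listed exactly once, which needs rational-canonical-form bookkeeping together with the $Aut(Q)$-action identifying $\phi$ with its admissible powers. Second, proving the listed groups pairwise non-isomorphic---this cannot be read off the presentations directly and is best handled by comparing invariants such as the isomorphism types of the Sylow subgroups, the order of $Z(G)$, and the derived subgroup $[G,G]$. Finally, the case $pq=6$ must be treated by hand, since the coincidence $p=q+1$ makes the larger Sylow possibly non-normal and introduces the exceptional groups $A_{4}\times\mathbb{Z}_{3}$, $H\times\mathbb{Z}_{3}$, and $K\times\mathbb{Z}_{2}$ with $K\cong S_{4}$, together with several collapses of the generic families, accounting for items 1--14.
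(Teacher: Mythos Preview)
The paper does not prove this theorem at all: Theorem~\ref{t1} is quoted verbatim from \cite{gpp2q2} and is used only as input for the subsequent automorphism computations. There is therefore no ``paper's own proof'' to compare against; the authors simply cite the classification and move on.

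That said, your outline is the standard route to such a classification and is essentially what one finds in \cite{gpp2q2}: Sylow analysis to force normality of the large-prime Sylow when $pq\neq 6$, reduction to orbits of homomorphisms $Q\to Aut(P)$ under $Aut(P)\times Aut(Q)$, and then a case split according to whether $P$ is cyclic (so $Aut(P)$ is cyclic) or elementary abelian (so $Aut(P)\cong GL_{2}(\mathbb{F}_{p})$, with the split/non-split dichotomy governed by $q\mid p-1$ versus $q\mid p+1$). Two remarks on the details. First, your identification $K\cong S_{4}$ is wrong: $K\times\mathbb{Z}_{2}$ has order $36$, so $|K|=18$, and in fact $K\cong(\mathbb{Z}_{3}\times\mathbb{Z}_{3})\rtimes\mathbb{Z}_{2}$ with the $\mathbb{Z}_{2}$ acting by inversion (as the table in the paper records). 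Second, the genuinely delicate part you flag---showing the exponent ranges for $n$ give exactly one representative per isomorphism class, and separating the listed groups by invariants---is where a full proof would spend most of its effort; your sketch correctly identifies this but does not carry it out, so as written it is an outline rather than a proof.
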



\section{Automorohism Groups of Groups of order $p^{2}q^{2}$}
In this section, we will compute the automorphism groups of the groups mentioned in the Theorem \ref{t1}. First, we will find the automorphism group of all groups of order $p^{2}q^{2}$, where $pq = 6$. Using GAP \cite{gap}, we have found the automorphism groups for cases $pq = 6$, which is listed in the following table.
\begin{table}[h]
\begin{tabular}{|c|c|c|}
\hline
Type & 	Group Description $(G)$ & Structure of $Aut(G)$\\
		\hline
	\hline
1&	$\mathbb{Z}_{36}$ & $U_{36}\simeq \mathbb{Z}_{6}\times \mathbb{Z}_{2}$\\
	\hline
2&	$\mathbb{Z}_{18} \times \mathbb{Z}_{2}$ & $\mathbb{Z}_{6}\times S_{3}$\\
	\hline
3&	$\mathbb{Z}_{6}\times \mathbb{Z}_{6}$ & $S_{3}\times Gl(2,3)$\\
	\hline
4&	$\mathbb{Z}_{12}\times \mathbb{Z}_{3}$ & $\mathbb{Z}_{2}\times GL(2,3)$\\
	\hline
5&	$\mathbb{Z}_{6}\times D_{3}$ & $\mathbb{Z}_{2}\times \mathbb{Z}_{2}\times S_{3}$\\
	\hline
6&	$D_{3}\times D_{3}$ & $(S_{3}\times S_{3})\rtimes \mathbb{Z}_{2}$\\
	\hline 
7&	$D_{9}\times \mathbb{Z}_{2}$ & $\mathbb{Z}_{2}\times (\mathbb{Z}_{9}\rtimes \mathbb{Z}_{6})$\\
	\hline
8&	$A_{4}\times \mathbb{Z}_{3}$ & $S_{4}\times S_{3}$\\
	\hline
9&	$D_{18}$ & $\mathbb{Z}_{2}\times (\mathbb{Z}_{9}\rtimes \mathbb{Z}_{6})$\\
	\hline
10&	$H\times \mathbb{Z}_{3} \simeq (\mathbb{Z}_{3}\rtimes \mathbb{Z}_{4})\times \mathbb{Z}_{3}$ & $\mathbb{Z}_{2}\times \mathbb{Z}_{3}\times S_{3}$\\
	\hline
11&	$K\times \mathbb{Z}_{2}\simeq ((\mathbb{Z}_{3}\times \mathbb{Z}_{3})\rtimes \mathbb{Z}_{2})\times \mathbb{Z}_{2}$ & $\mathbb{Z}_{2}\times ((((\mathbb{Z}_{3}\times \mathbb{Z}_{3})\rtimes Q_{8}) \rtimes \mathbb{Z}_{3}) \rtimes \mathbb{Z}_{2})$\\
	\hline
12&	$(\mathbb{Z}_{3}\times \mathbb{Z}_{3})\rtimes \mathbb{Z}_{4}$ & $(\mathbb{Z}_{3}\times \mathbb{Z}_{3})\rtimes (\mathbb{Z}_{8}\rtimes \mathbb{Z}_{2})$\\
	\hline
13&	$(\mathbb{Z}_{2}\times \mathbb{Z}_{2})\rtimes \mathbb{Z}_{9}$ & $\mathbb{Z}_{3}\times S_{4}$\\
	\hline
14&	$(\mathbb{Z}_{3}\times \mathbb{Z}_{3})\rtimes \mathbb{Z}_{4}$ & $(\mathbb{Z}_{3}\times \mathbb{Z}_{3})\rtimes (GL(2,3)\times \mathbb{Z}_{2})$\\
	 \hline	
\end{tabular}
\caption{ Structure of $Aut(G)$ of all groups $G$ of order $p^{2}q^{2}$ such that $pq=6.$}
\end{table}

Now, we will compute the automorphism group $Aut(G)$ of all groups of order $p^{2}q^{2}$ upto isomorphism in the Theorem \ref{t1} such that $pq\ne 6$. Note that, in the following cases, we will take $H$ to be isomorphic to either $\mathbb{Z}_{p}\times \mathbb{Z}_{p}$ or $\mathbb{Z}_{p^{2}}$ and $K$ to be isomorphic to either $\mathbb{Z}_{q}\times \mathbb{Z}_{q}$ or $\mathbb{Z}_{q^{2}}$. Since $p$ and $q$ are distinct primes, $Hom(H, K), Hom(K,H)$ and $Hom(H/[H,K], K)$ are all trivial. Therefore, the map $\gamma = 0$ always.

\subsection{Type 15.} Let $G \simeq \mathbb{Z}_{p^{2}}\times \mathbb{Z}_{q^{2}}$. Since, $p$ and $q$ are distinct primes, $\gcd(p,q) = 1$. Therefore, $G \simeq \mathbb{Z}_{p^{2}q^{2}}$. Thus, $Aut(G) \simeq  \mathbb{Z}_{p(p-1)}\times \mathbb{Z}_{q(q-1)}$. \label{g15}
	
\subsection{Type 16.} Let $G \simeq \mathbb{Z}_{q}\times \mathbb{Z}_{q}\times \mathbb{Z}_{p^{2}}$. Let $H \simeq  \mathbb{Z}_{q}\times \mathbb{Z}_{q}$ and $K \simeq \mathbb{Z}_{p^{2}}$. Then $A \simeq Aut(H)\simeq GL(2,q)$ and $D \simeq Aut(K)\simeq \mathbb{Z}_{p(p-1)}$. Hence, using the Theorem \ref{dp}, $Aut(G) \simeq A\times D \simeq GL(2,q)\times \mathbb{Z}_{p(p-1)}$. \label{g16}
	
	\subsection{Type 17.} Let $G \simeq \mathbb{Z}_{q^{2}}\times \mathbb{Z}_{p}\times \mathbb{Z}_{p}$. Then, using the Theorem \ref{dp}, $Aut(G) \simeq \mathbb{Z}_{q(q-1)}\times GL(2,p)$. \label{g17}
	
\subsection{Type 18.} Let $G \simeq \mathbb{Z}_{q}\times \mathbb{Z}_{q}\times \mathbb{Z}_{p}\times \mathbb{Z}_{p}$. Let $H \simeq \mathbb{Z}_{q}\times \mathbb{Z}_{q}$ and $K\simeq \mathbb{Z}_{p}\times \mathbb{Z}_{p}$. Then using the similar argument as in \ref{g16}, we get $A\simeq Aut(H) \simeq GL(2,q)$ and $D \simeq Aut(K)\simeq GL(2,p)$. Hence, $Aut(G) \simeq A\times D \simeq GL(2,q)\times GL(2,p)$. \label{g18}
	
	\subsection{Type 19.}  Let $K = \langle a\mid a^{q^{2}}=1\rangle \simeq \mathbb{Z}_{q^{2}}$, $H = \langle b\mid b^{p^{2}}=1\rangle \simeq \mathbb{Z}_{p^{2}}$ and $\phi: K \longrightarrow Aut(H)$ be the homomorphism defined by $\phi(a)(b) = a^{-1}ba = b^{r}$, where $r^{q}\equiv 1 \Mod{p^{2}}$. Then $G \simeq H\rtimes_{\phi} K$. Now, let $(\alpha, \delta) \in P$. Then $\alpha(b) = b^{i}$ and $\delta(a) = a^{j}$, where $1\le i\le  p^{2}-1, \gcd(i,p) =1$ and $1\le j\le q^{2}-1, \gcd(j,q) =1$. Since $(\alpha, \delta) \in P$, $\alpha(b^{a}) = \alpha(b)^{\delta(a)}$. Now, $b^{ri} = \alpha(b^{r}) = \alpha(b^{a}) = \alpha(b)^{\delta(a)} = (b^{i})^{a^{j}} = b^{ir^{j}}$. Thus, $ri\equiv ir^{j}\Mod{p^{2}}$, which implies that $r^{j-1}\equiv 1\Mod{p^{2}}$. Therefore, $j\equiv 1 \Mod{q}$ and so, $Aut(K)\simeq \mathbb{Z}_{q}$. Therefore, $R\simeq \mathbb{Z}_{q}\times \mathbb{Z}_{p(p-1)}$.
		
	Now, let $\beta\in S$ be defined by $\beta(a) = b^{\lambda}$, where $0\le \lambda\le p^{2}-1$. Then $\beta(a^{2}) = \beta(a)\beta(a)^{a} = b^{\lambda}(b^{\lambda})^{a} = b^{\lambda(1+r)}$. Inductively, we get, $\beta(a^{l}) = b^{\lambda(1+r+r^{2}+\cdot + r^{l-1})} = b^{\lambda\frac{r^{l}-1}{r-1}}$ for any $0\le l\le q^{2}-1$. In particular, $\beta(a^{q}) = \beta(a^{q^{2}}) = 1$. Thus, $S\simeq Q \simeq \mathbb{Z}_{p^{2}}$. Hence, using the Theorem \ref{main}, $Aut(G) \simeq Q\rtimes R \simeq \mathbb{Z}_{p^{2}}\rtimes (\mathbb{Z}_{p(p-1)}\times \mathbb{Z}_{q})$. \label{g19}
	
	\subsection{Type 20.}  Let $K = \langle a\mid a^{q^{2}}=1\rangle \simeq \mathbb{Z}_{q^{2}}$, $H = \langle b\mid b^{p^{2}}=1\rangle \simeq \mathbb{Z}_{p^{2}}$ and $\phi: K \longrightarrow Aut(H)$ be the homomorphism defined by $\phi(a)(b) = a^{-1}ba = b^{r}$, where $r^{q^{2}}\equiv 1 \Mod{p^{2}}$. Then $G\simeq H\rtimes_{\phi} K$. Using the similar argument as in the above part \ref{g19}, we get $R \simeq \mathbb{Z}_{p(p-1)}$ and $Q \simeq \mathbb{Z}_{p^{2}}$. Hence, using the Theorem \ref{main}, $Aut(G) \simeq Q\rtimes R \simeq \mathbb{Z}_{p^{2}}\rtimes \mathbb{Z}_{p(p-1)}$. \label{g20}
	
	\subsection{Type 21.} Let $H = \langle c\mid c^{p^{2}}=1\rangle \simeq \mathbb{Z}_{p^{2}}$, $K = \langle a,b \mid a^{q} = b^{q} = [a,b] = 1\rangle \simeq \mathbb{Z}_{q}\times \mathbb{Z}_{q}$ and the homomorphism $\phi: K \longrightarrow Aut(H)$ be defined as $\phi(a)(c) = a^{-1}ca = c^{r}, \phi(b)(c) = b^{-1}cb = c$, where $r^{q}\equiv 1 \Mod{p^{2}}$. Then $G\simeq H\rtimes_{\phi} K$. Now, let $(\alpha, \delta)\in P$. Then $\alpha\in Aut(H)$ is defined by $\alpha(c) = c^{s}$ and $\delta\in Aut(K) \simeq GL(2,q)$ is given by $\delta = \begin{pmatrix}
	i& l\\ j & k
	\end{pmatrix}$ such that $\delta(a) = a^{i}b^{j}, \delta(b) = a^{l}b^{k}$, where $1\le s \le p^{2}-1, \gcd(s,p) = 1$, $0\le i,j,l,k \le q-1$ and $ik\not\equiv jl \Mod{q}$. Since $(\alpha, \delta)\in P$,  $\alpha(c^{a}) = \alpha(c)^{\delta(a)}$ and $\alpha(c^{b}) = \alpha(c)^{\delta(b)}$.
	
	Now, $c^{rs} = \alpha(c^{a}) = \alpha(c)^{\delta(a)} = (c^{s})^{a^{i}b^{j}} = c^{sr^{i}}$. Then $rs \equiv sr^{i} \Mod{p^{2}}$ which implies that $r^{i-1}\equiv 1 \Mod{p^{2}}$. So, $i\equiv 1 \Mod{q}$. Also, $c^{s} = \alpha(c^{b}) = \alpha(c)^{\delta(b)} = (c^{s})^{a^{l}b^{k}} = c^{sr^{l}}$. Then $sr^{l} \equiv s \Mod{p^{2}}$ which implies that $r^{l} \equiv 1\Mod{p^{2}}$. Therefore, $l \equiv 0\Mod{q}$. Thus $P\sim R \simeq \mathbb{Z}_{p(p-1)}\times (\mathbb{Z}_{q}\rtimes \mathbb{Z}_{q-1})$. Let $\beta\in S$ be defined by $\beta(a) = c^{\lambda}$ and $\beta(b) = c^{\rho}$, where $0\le \lambda, \rho \le p^{2}-1$. Now, $\beta(a^{2}) = \beta(a)\beta(a)^{a} = c^{\lambda}(c^{\lambda})^{a} = c^{\lambda(1+r)}$. Inductively, we get for $0\le l\le q-1$, $\beta(a^{l}) = c^{\lambda(1+r+r^{2}+\cdots + r^{l-1})} = c^{\lambda\frac{r^{l}-1}{r-1}}$. In particular, $\beta(a^{q}) = c^{\lambda\frac{r^{q}-1}{r-1}} = 1$. Also, $\beta(b^{2}) = c^{2\rho}$ and $ \beta(b^{q}) = c^{q\rho}$. Since $\beta(b^{q}) = 1$, $q\rho \equiv 0\Mod{p^{2}}$ which implies that $\rho = 0$. Thus, we have $\beta(a) = c^{\lambda}$ and $\beta(b) = 1$, where $0\le \lambda \le p^{2}-1$ and so, $S \simeq Q\simeq \mathbb{Z}_{p^{2}}$. Hence, using the Theorem \ref{main}, $Aut(G) \simeq Q\rtimes R \simeq \mathbb{Z}_{p^{2}}\rtimes (\mathbb{Z}_{p(p-1)}\times (\mathbb{Z}_{q}\rtimes \mathbb{Z}_{q-1}))$. \label{g21}
	
		\subsection{Type 22.}  Let $H = \langle b, c\mid b^{p} = c^{p}=1, bc=cb\rangle \simeq \mathbb{Z}_{p}\times \mathbb{Z}_{p}$, $K = \langle a \mid a^{q^{2}} =1\rangle \simeq \mathbb{Z}_{q^{2}}$ and the homomorphism $\phi_{r}: K \longrightarrow Aut(H)$ be defined as $\phi_{r}(a)(b) = a^{-1}ba = b^{r}, \phi_{r}(a)(c) = a^{-1}ca = c^{r}$, where $r^{q}\equiv 1 \Mod{p^{2}}$. Then $G\simeq H\rtimes_{\phi_{r}} K$. Now, let $(\alpha, \delta)\in P$. Then $\delta\in Aut(K)$ is defined by $\delta(a) = a^{s}$ and $\alpha\in Aut(H) \simeq GL(2,p)$ is given by $\alpha = \begin{pmatrix}
	i& l\\ j & k
	\end{pmatrix}$ such that $\alpha(b) = b^{i}c^{j}, \alpha(c) = b^{l}c^{k}$, where $1\le s \le q^{2}-1, \gcd(s,q) = 1$, $0\le i,j,l,k \le p-1$ and $ik\not\equiv jl \Mod{p}$. Since, $(\alpha, \delta)\in P$, $\alpha(b^{a}) = \alpha(b)^{\delta(a)}$ and $\alpha(c^{a}) = \alpha(c)^{\delta(a)}$.
	
	Now, $b^{ir}c^{jr} = \alpha(b^{r}) = \alpha(b^{a}) = \alpha(b)^{\delta(a)} = (b^{i}c^{j})^{a^{s}} = b^{ir^{s}}c^{jr^{s}}$. Then $ir \equiv ir^{s}\Mod{p}$ and $jr \equiv jr^{s}\Mod{p}$ which implies that $r^{s-1}\equiv 1\Mod{p}$ and so, $s\equiv 1\Mod{q}$. Also, using $\alpha(c^{a}) = \alpha(c)^{\delta(a)}$, we get $s\equiv 1\Mod{q}$. So, we get $\alpha = \begin{pmatrix}
	i& l\\ j & k
	\end{pmatrix}$ and $\delta(a) = a^{s}$, where $s\equiv 1\Mod{q}$, $0\le i,j,l,k \le p-1$ and $ik\not\equiv jl \Mod{p}$. Thus $P\simeq R\simeq GL(2,p)\times \mathbb{Z}_{q}$. Now, let $\beta\in S$ be defined by $\beta(a) = b^{\lambda}c^{\rho}$, where $0\le \lambda, \rho \le p-1$. Then $\beta(a^{2}) = b^{\lambda(1+ r)}c^{\rho(1+r)}$ and $\beta(a^{q}) = b^{\lambda(1+ r + r^{2}+ \cdots + r^{q-1})}c^{\rho(1+ r + r^{2}+ \cdots + r^{q-1})} = b^{\lambda\frac{r^{q}-1}{r-1}}c^{\rho\frac{r^{q}-1}{r-1}} = 1 = \beta(a^{q^{2}})$. Thus $\beta(a) = b^{\lambda}c^{\rho}$, where $0\le \lambda, \rho \le p-1$ and so, $S\simeq Q\simeq \mathbb{Z}_{p}\times \mathbb{Z}_{p}$. Hence, using the Theorem \ref{main}, $Aut(G)\simeq (\mathbb{Z}_{p}\times \mathbb{Z}_{p})\rtimes (GL(2,p)\times \mathbb{Z}_{q})$. \label{g22}
	
		\subsection{Type 23.}  Let $H = \langle b, c\mid b^{p} = c^{p}=1, bc=cb\rangle \simeq \mathbb{Z}_{p}\times \mathbb{Z}_{p}$, $K = \langle a \mid a^{4} =1\rangle \simeq \mathbb{Z}_{4}$ and the homomorphism $\phi: K \longrightarrow Aut(H)$ be defined as $\phi(a)(b) = a^{-1}ba = b, \phi(a)(c) = a^{-1}ca = c^{-1}$. Then $G\simeq H\rtimes_{\phi} K$. Now, let $(\alpha, \delta)\in P$. Then $\delta\in Aut(K)$ is defined by $\delta(a) = a^{s}$ and $\alpha\in Aut(H) \simeq GL(2,p)$ is given by $\alpha = \begin{pmatrix}
	i& l\\ j & k
	\end{pmatrix}$ such that $\alpha(b) = b^{i}c^{j}, \alpha(c) = b^{l}c^{k}$, where $s\in \{1,3\}$, $0\le i,j,l,k \le p-1$ and $ik\not\equiv jl \Mod{p}$. Since, $(\alpha, \delta)\in P$, $\alpha(b^{a}) = \alpha(b)^{\delta(a)}$ and $\alpha(c^{a}) = \alpha(c)^{\delta(a)}$.
	
	Now, $b^{i}c^{j} = \alpha(b) = \alpha(b^{a}) = \alpha(b)^{\delta(a)} = (b^{i}c^{j})^{a^{s}} = b^{i}c^{-j}$. Then $j\equiv -j \Mod{p}$ which implies that $j = 0$. Also, $b^{-l}c^{-k} = \alpha(c^{-1}) = \alpha(c^{a}) = \alpha(c)^{\delta(a)} = (b^{l}c^{k})^{a^{s}} = b^{l}c^{-k}$. Then $-l \equiv l \Mod{p}$ which implies that $l = 0$. Thus, $\alpha = \begin{pmatrix}
	i& 0\\ 0 & k
	\end{pmatrix}$ and $\delta(a) = a^{s}$, where $1\le i,k \le p-1$ and $s\in \{1,3\}$ and so, $R\simeq (\mathbb{Z}_{p-1}\times \mathbb{Z}_{p-1})\times \mathbb{Z}_{2}$. Now, let $\beta\in S$ be defined by $\beta(a) = b^{\lambda}c^{\rho}$, where $0\le \lambda, \rho \le p-1$. Then $\beta(a^{2}) = b^{2\lambda}$, $\beta(a^{3}) = b^{3\lambda}c^{\rho}$  and $1 = \beta(a^{4}) = b^{4\lambda}$. Thus $4\lambda \equiv0\Mod{p}$ which implies that $\lambda = 0$ and so, $\beta(a) = c^{\rho}$, where $0\le \rho \le p-1$ and so, $Q\simeq \mathbb{Z}_{p}$. Hence, using the Theorem \ref{main}, $Aut(G)\simeq \mathbb{Z}_{p}\rtimes ((\mathbb{Z}_{p-1}\times \mathbb{Z}_{p-1})\times \mathbb{Z}_{2})$. \label{g23}
	
		\subsection{Type 24.}  Let $H = \langle b, c\mid b^{p} = c^{p}=1, bc=cb\rangle \simeq \mathbb{Z}_{p}\times \mathbb{Z}_{p}$, $K = \langle a \mid a^{q^{2}} =1\rangle \simeq \mathbb{Z}_{q^{2}}$ and the homomorphism $\phi: K \longrightarrow Aut(H)$ be defined as $\phi(a)(b) = a^{-1}ba = b, \phi(a)(c) = a^{-1}ca = c^{r}$, where $r^{q}\equiv 1 \Mod{p}$. Then $G\simeq H\rtimes_{\phi} K$. Now, let $(\alpha, \delta)\in P$. Then $\delta\in Aut(K)$ is defined by $\delta(a) = a^{s}$ and $\alpha\in Aut(H) \simeq GL(2,p)$ is given by $\alpha = \begin{pmatrix}
	i& l\\ j & k
	\end{pmatrix}$ such that $\alpha(b) = b^{i}c^{j}, \alpha(c) = b^{l}c^{k}$, where $1\le s \le q^{2}-1, \gcd(s,q) = 1$, $0\le i,j,l,k \le p-1$ and $ik\not\equiv jl \Mod{p}$. Since, $(\alpha, \delta)\in P$, $\alpha(b^{a}) = \alpha(b)^{\delta(a)}$ and $\alpha(c^{a}) = \alpha(c)^{\delta(a)}$.
	
	Now, $b^{i}c^{j} = \alpha(b) = \alpha(b^{a}) = \alpha(b)^{\delta(a)} = (b^{i}c^{j})^{a^{s}} = b^{i}c^{jr^{s}}$. Then $j\equiv jr^{s}\Mod{p}$. Since, $\gcd(s,q) = 1$, $r^{s}\not\equiv 1 \Mod{p}$. Therefore, $j = 0$. Also, $b^{rl}c^{rk} = \alpha(c^{r}) = \alpha(c^{a}) = \alpha(c)^{\delta(a)} = (b^{l}c^{k})^{a^{s}} = b^{l}c^{kr^{s}}$. Then, $rl \equiv l\Mod{p}$
 and $rk\equiv kr^{s}\Mod{p}$ which implies that $l = 0$ and $(r^{s-1}-1)k\equiv 0\Mod{p}$. So, $k\ne 0$, otherwise $\alpha\not\in GL(2,p)$. Therefore, $r^{s-1} \equiv 1 \Mod{p}$ and so, $s\equiv 1 \Mod{q}$. Thus, we have $\alpha = \begin{pmatrix}
 i& 0\\ 0 & k
 \end{pmatrix}$ and $\delta(a) = a^{s}$, where $s\equiv 1 \Mod{q}$ and $1\le i,k \le p-1$ and so, $R \simeq (\mathbb{Z}_{p-1}\times \mathbb{Z}_{p-1})\times \mathbb{Z}_{q}$. Now, let $\beta\in S$ be defined by $\beta(a) = b^{\lambda}c^{\rho}$, where $0\le \lambda, \rho \le p-1$. Then $\beta(a^{2}) = b^{2\lambda}c^{\rho(1+r)}$ and for any $0\le l\le q^{2}-1$, $\beta(a^{l}) = b^{l\lambda}c^{\rho\frac{r^{l}-1}{r-1}}$. In particular, $1 = \beta(a^{q^{2}}) = b^{q^{2}\lambda}$. Thus $q^{2}\lambda \equiv0\Mod{p}$ which implies that $\lambda = 0$. Therefore, $\beta(a) = c^{\rho}$, where $0\le \rho \le p-1$ and so, $Q\simeq \mathbb{Z}_{p}$. Hence, using the Theorem \ref{main}, $Aut(G)\simeq \mathbb{Z}_{p}\rtimes ((\mathbb{Z}_{p-1}\times \mathbb{Z}_{p-1})\times \mathbb{Z}_{q})$. \label{g24}
 
 	\subsection{Type 25.}  Let $H = \langle b, c\mid b^{p} = c^{p}=1, bc=cb\rangle \simeq \mathbb{Z}_{p}\times \mathbb{Z}_{p}$, $K = \langle a \mid a^{q^{2}} =1\rangle \simeq \mathbb{Z}_{q^{2}}$ and the homomorphism $\phi_{r^{-1}}: K \longrightarrow Aut(H)$ be defined as $\phi_{r^{-1}}(a)(b) = a^{-1}ba = b^{r}, \phi_{r^{-1}}(a)(c) = a^{-1}ca = c^{r^{-1}}$, where $r^{q}\equiv 1 \Mod{p}$. Then $G\simeq H\rtimes_{\phi_{r^{-1}}} K$. Now, let $(\alpha, \delta)\in P$. Then $\delta\in Aut(K)$ is defined by $\delta(a) = a^{s}$ and $\alpha\in Aut(H) \simeq GL(2,p)$ is given by $\alpha = \begin{pmatrix}
 i& l\\ j & k
 \end{pmatrix}$ such that $\alpha(b) = b^{i}c^{j}, \alpha(c) = b^{l}c^{k}$, where $1\le s \le q^{2}-1, \gcd(s,q) = 1$, $0\le i,j,l,k \le p-1$ and $ik\not\equiv jl \Mod{p}$. Since, $(\alpha, \delta)\in P$, $\alpha(b^{a}) = \alpha(b)^{\delta(a)}$ and $\alpha(c^{a}) = \alpha(c)^{\delta(a)}$.
 
 Now, $b^{ri}c^{rj} = \alpha(b^{r}) = \alpha(b^{a}) = \alpha(b)^{\delta(a)} = (b^{i}c^{j})^{a^{s}} = b^{ir^{s}}c^{jr^{-s}}$ and $b^{lr^{-1}}c^{kr^{-1}} = \alpha(c^{r^{-1}}) = \alpha(c^{a}) = \alpha(c)^{\delta(a)} = (b^{l}c^{k})^{a^{s}} = b^{lr^{s}}c^{kr^{-s}}$. Then $ri \equiv ir^{s}\Mod{p}$, $rj \equiv jr^{-s}\Mod{p}$, $lr^{-1}\equiv lr^{s}\Mod{p}$ and $kr^{-1}\equiv kr^{-s} \Mod{p}$. Since both $i, j$ can not vanish together. We have two cases, namely either $i\not\equiv 0\Mod{p}$ or $j\not\equiv 0\Mod{p}$. Also, we will observe that both $i$ and $j$ can not be non-zero at the same time.
 
 First, let $i\not\equiv 0\Mod{p}$. Then $r^{s-1}\equiv 1\Mod{p}$ which implies that $s\equiv 1 \Mod{q}$. Also, using $rj \equiv jr^{-s}\Mod{p}$, we get $(r^{2}-1)j\equiv 0\Mod{p}$ which implies that $j = 0$ as $r^{2}\not\equiv 1\Mod{p}$. Similarly, $l = 0$. So, in this case $\alpha = \begin{pmatrix}
 i& 0\\ 0 & k
 \end{pmatrix}$ and $\delta(a) = a^{s}$, where $s\equiv 1 \Mod{q}$ and $1\le i,k \le p-1$. Now, let $j\not\equiv 0\Mod{p}$. Then $r^{s+1}\equiv 1\Mod{p}$ which implies that $s\equiv -1\Mod{q}$. Also, using $ri \equiv ir^{s}\Mod{p}$, we get $(r^{2}-1)i\equiv 0\Mod{p}$ which implies that $i = 0$ as $r^{2}\not\equiv 1\Mod{p}$. Similarly, $k = 0$. So, in this case $\alpha = \begin{pmatrix}
 0& j\\ l & 0
 \end{pmatrix}$ and $\delta(a) = a^{s}$, where $s\equiv -1 \Mod{q}$ and $1\le i,k \le p-1$. From both the cases, we have $R\simeq \langle \left(\begin{pmatrix}
 i& 0\\ 0 & k
 \end{pmatrix},\delta_{s}\right), \left(\begin{pmatrix}
 0& j\\ l & 0
 \end{pmatrix}, \delta_{-s}\right) \rangle \simeq ((\mathbb{Z}_{p-1}\times \mathbb{Z}_{p-1})\times \mathbb{Z}_{q})\rtimes \mathbb{Z}_{2}$, where
   $\delta_{s}(a) = a^{s}$ and $s\equiv \pm 1\Mod{p}$. 
 
	Now, let $\beta\in S$ be defined by $\beta(a) = b^{\lambda}c^{\rho}$, where $0\le \lambda, \rho \le p-1$. Then $\beta(a^{2}) = b^{\lambda(1+r)}c^{\rho(1+r^{-1})}$ and for any $0\le l\le q^{2}-1$, $\beta(a^{l}) = b^{\lambda\frac{r^{l}-1}{r-1}}c^{\rho\frac{r^{-l}-1}{r-1}}$. In particular, $\beta(a^{q^{2}}) = b^{\lambda\frac{r^{q^{2}}-1}{r-1}}c^{\rho\frac{r^{-q^{2}}-1}{r-1}} = 1$. Thus $\beta(a) = b^{\lambda}c^{\rho}$, where $0\le \lambda, \rho \le p-1$ and so, $Q\simeq \mathbb{Z}_{p}\times \mathbb{Z}_{p}$. Hence, using the Theorem \ref{main}, $Aut(G)\simeq (\mathbb{Z}_{p}\times \mathbb{Z}_{p})\rtimes (((\mathbb{Z}_{p-1}\times \mathbb{Z}_{p-1})\times \mathbb{Z}_{q})\rtimes \mathbb{Z}_{2})$. \label{g25}
	
		\subsection{Type 26.} Let $H = \langle b, c\mid b^{p} = c^{p}=1, bc=cb\rangle \simeq \mathbb{Z}_{p}\times \mathbb{Z}_{p}$, $K = \langle a \mid a^{q^{2}} =1\rangle \simeq \mathbb{Z}_{q^{2}}$ and the homomorphism $\phi_{r^{-1}}: K \longrightarrow Aut(H)$ be defined as $\phi_{r^{-1}}(a)(b) = a^{-1}ba = b^{r}, \phi_{r^{-1}}(a)(c) = a^{-1}ca = c^{r^{-1}}$, where $r^{q^{2}}\equiv 1 \Mod{p}$. Then $G\simeq H\rtimes_{\phi_{r^{-1}}} K$. Using the similar argument as in  \ref{g25}, we get $R\simeq (\mathbb{Z}_{p-1}\times \mathbb{Z}_{p-1})\rtimes \mathbb{Z}_{2}$ and $Q\simeq \mathbb{Z}_{p}\times \mathbb{Z}_{p}$. Hence, using the Theorem \ref{main}, $Aut(G)\simeq (\mathbb{Z}_{p}\times \mathbb{Z}_{p})\rtimes ((\mathbb{Z}_{p-1}\times \mathbb{Z}_{p-1})\rtimes \mathbb{Z}_{2})$. \label{g26}
	
		\subsection{Type 27.}  Let $H = \langle b, c\mid b^{p} = c^{p}=1, bc=cb\rangle \simeq \mathbb{Z}_{p}\times \mathbb{Z}_{p}$, $K = \langle a \mid a^{q^{2}} =1\rangle \simeq \mathbb{Z}_{q^{2}}$ and the homomorphism $\phi_{r^{n}}: K \longrightarrow Aut(H)$ be defined as $\phi_{r^{n}}(a)(b) = a^{-1}ba = b^{r}, \phi_{r^{n}}(a)(c) = a^{-1}ca = c^{r^{n}}$, where $r^{q}\equiv 1 \Mod{p}$ and $n\in \{2,3 \cdots, \frac{q-1}{2}\}$. Then $G\simeq H\rtimes_{\phi_{r^{n}}} K$. Now, let $(\alpha, \delta)\in P$. Then $\delta\in Aut(K)$ is defined by $\delta(a) = a^{s}$ and $\alpha\in Aut(H) \simeq GL(2,p)$ is given by $\alpha = \begin{pmatrix}
	i& l\\ j & k
	\end{pmatrix}$ such that $\alpha(b) = b^{i}c^{j}, \alpha(c) = b^{l}c^{k}$, where $1\le s \le q^{2}-1, \gcd(s,q) = 1$, $0\le i,j,l,k \le p-1$ and $ik\not\equiv jl \Mod{p}$. Since, $(\alpha, \delta)\in P$, $\alpha(b^{a}) = \alpha(b)^{\delta(a)}$ and $\alpha(c^{a}) = \alpha(c)^{\delta(a)}$.
	
	Now, $b^{ri}c^{rj} = \alpha(b^{r}) = \alpha(b^{a}) = \alpha(b)^{\delta(a)} = (b^{i}c^{j})^{a^{s}} = b^{ir^{s}}c^{jr^{ns}}$ and $b^{lr^{n}}c^{kr^{n}} = \alpha(c^{r^{n}}) = \alpha(c^{a}) = \alpha(c)^{\delta(a)} = (b^{l}c^{k})^{a^{s}} = b^{lr^{s}}c^{kr^{ns}}$. Then $ri \equiv ir^{s}\Mod{p}$, $rj \equiv jr^{ns}\Mod{p}$, $lr^{n}\equiv lr^{s}\Mod{p}$ and $kr^{n}\equiv kr^{ns} \Mod{p}$. Since both $i, j$ can not vanish together. We have two cases, namely either $i\not\equiv 0\Mod{p}$ or $j\not\equiv 0\Mod{p}$. Also, we will observe that both $i$ and $j$ can not be non-zero at the same time.
	
	First, let $i\not\equiv 0\Mod{p}$. Then $r^{s-1}\equiv 1\Mod{p}$ which implies that $s\equiv 1 \Mod{q}$. Also, using $rj \equiv jr^{ns}\Mod{p}$, we get $(r^{n-1}-1)j\equiv 0\Mod{p}$. If $j\not\equiv 0\Mod{p}$, then $r^{n-1}\equiv 1 \Mod{p}$ and so, $n\equiv 1 \Mod{q}$ which is a contradiction. Therefore, $j = 0$. Similarly, $l = 0$. So, in this case $\alpha = \begin{pmatrix}
	i& 0\\ 0 & k
	\end{pmatrix}$ and $\delta(a) = a^{s}$, where $s\equiv 1 \Mod{q}$ and $1\le i,k \le p-1$. 
	
	Now, assume that $j\not\equiv 0\Mod{p}$. Then $r^{ns-1}\equiv 1\Mod{p}$ which implies that $ns\equiv 1\Mod{q}$. Using $kr^{n}\equiv kr^{ns} \Mod{p}$, we get $k\equiv 0\Mod{p}$. Now, using $lr^{n}\equiv lr^{s}\Mod{p}$, $lr^{n^{2}}\equiv lr^{ns}\Mod{p}$ which implies that $l\equiv 0 \Mod{p}$, otherwise $n^{2}\equiv 1 \Mod{q}$ which is a contradiction.  But, then $\alpha\not\in GL(2,p)$. Therefore, the case $j\not\equiv 0\Mod{p}$ is not possible. Thus, we have $R \simeq (\mathbb{Z}_{p-1}\times \mathbb{Z}_{p-1})\times \mathbb{Z}_{q}$. Now, let $\beta\in S$ be defined by $\beta(a) = b^{\lambda}c^{\rho}$, where $0\le \lambda, \rho \le p-1$. Then for any $l (0\le l \le q^{2}-1)$, we have
	$\beta(a^{l}) = b^{\lambda \frac{r^{l}-1}{r-1}}c^{\rho \frac{r^{nl}-1}{r-1}}$. Clearly, $\beta(a^{q^{2}}) = 1$ and so, $Q\simeq \mathbb{Z}_{p}\times \mathbb{Z}_{p}$. Hence, using the Theorem \ref{main}, $Aut(G)\simeq (\mathbb{Z}_{p}\times \mathbb{Z}_{p})\rtimes ((\mathbb{Z}_{p-1}\times \mathbb{Z}_{p-1})\rtimes \mathbb{Z}_{q})$. \label{g27}
	
		\subsection{Type 28.}  Let $H = \langle b, c\mid b^{p} = c^{p}=1, bc=cb\rangle \simeq \mathbb{Z}_{p}\times \mathbb{Z}_{p}$, $K = \langle a \mid a^{q^{2}} =1\rangle \simeq \mathbb{Z}_{q^{2}}$ and the homomorphism $\phi_{r^{n}}: K \longrightarrow Aut(H)$ be defined as $\phi_{r^{n}}(a)(b) = a^{-1}ba = b^{r}, \phi_{r^{n}}(a)(c) = a^{-1}ca = c^{r^{n}}$, where $r^{q^{2}}\equiv 1 \Mod{p}$ and $2\le n\le \frac{q^{2}-1}{2}$ or $n = mq(m\ge \frac{q+1}{2})$. Then $G\simeq H\rtimes_{\phi_{r^{n}}} K$. Using the similar argument as in \ref{g27}, we get $R\simeq \mathbb{Z}_{p-1}\times \mathbb{Z}_{p-1}$ and $Q\simeq \mathbb{Z}_{p}\times \mathbb{Z}_{p}$. Hence, using the Theorem \ref{main}, $Aut(G)\simeq (\mathbb{Z}_{p}\times \mathbb{Z}_{p})\rtimes (\mathbb{Z}_{p-1}\times \mathbb{Z}_{p-1})$. \label{g28}

	\subsection{Type 29.}  Let $H = \langle b, c\mid b^{p} = c^{p}=1, bc=cb\rangle \simeq \mathbb{Z}_{p}\times \mathbb{Z}_{p}$, $K = \langle a \mid a^{q^{2}} =1\rangle \simeq \mathbb{Z}_{q^{2}}$ and the homomorphism $\phi: K \longrightarrow Aut(H)$ be defined as $\phi(a)(b) = a^{-1}ba = b^{r}, \phi(a)(c) = a^{-1}ca = c^{r}$, where $r^{q^{2}}\equiv 1 \Mod{p}$ and $n\in \{2,3, \cdots, \frac{q-1}{2}\}$. Then $G\simeq H\rtimes_{\phi} K$. Now, let $(\alpha, \delta)\in P$. Then $\delta\in Aut(K)$ is defined by $\delta(a) = a^{s}$ and $\alpha\in Aut(H) \simeq GL(2,p)$ is given by $\alpha = \begin{pmatrix}
i& l\\ j & k
\end{pmatrix}$ such that $\alpha(b) = b^{i}c^{j}, \alpha(c) = b^{l}c^{k}$, where $1\le s \le q^{2}-1, \gcd(s,q) = 1$, $0\le i,j,l,k \le p-1$ and $ik\not\equiv jl \Mod{p}$. Since, $(\alpha, \delta)\in P$, $\alpha(b^{a}) = \alpha(b)^{\delta(a)}$ and $\alpha(c^{a}) = \alpha(c)^{\delta(a)}$.

	Now, $b^{ri}c^{rj} = \alpha(b^{r}) = \alpha(b^{a}) = \alpha(b)^{\delta(a)} = (b^{i}c^{j})^{a^{s}} = b^{ir^{s}}c^{jr^{s}}$ and $b^{lr}c^{kr} = \alpha(c^{r}) = \alpha(c^{a}) = \alpha(c)^{\delta(a)} = (b^{l}c^{k})^{a^{s}} = b^{lr^{s}}c^{kr^{s}}$. Then $ri \equiv ir^{s}\Mod{p}$, $rj \equiv jr^{s}\Mod{p}$, $lr\equiv lr^{s}\Mod{p}$ and $kr\equiv kr^{s} \Mod{p}$. Since, all $i,j,l,k$ can not vanish together, take $i\not\equiv 0 \Mod{p}$. Then we get, $r^{s-1}\equiv 1 \Mod{p}$ which implies that $s\equiv 1 \Mod{q^{2}}$. Therefore, $\alpha = \begin{pmatrix}
	i& l\\ j & k
	\end{pmatrix}$ and $\delta(a) = a$, where $0\le i,j,l,k \le p-1$ and $ik\not\equiv jl \Mod{p}$. Thus $R\simeq GL(2,p)$.
	
	Now, let $\beta\in S$ be defined by $\beta(a) = b^{\lambda}c^{\rho}$, where $0\le \lambda, \rho \le p-1$. Then for any $l (0\le l \le q^{2}-1)$, we have
	$\beta(a^{l}) = b^{\lambda \frac{r^{l}-1}{r-1}}c^{\rho \frac{r^{l}-1}{r-1}}$. Clearly, $\beta(a^{q^{2}}) = 1$ and so, $Q\simeq \mathbb{Z}_{p}\times \mathbb{Z}_{p}$. Hence, using the Theorem \ref{main}, $Aut(G)\simeq (\mathbb{Z}_{p}\times \mathbb{Z}_{p})\rtimes GL(2,p)$. \label{g29}
	
		\subsection{Type 30.}  Let $H = \langle b, c\mid b^{p} = c^{p}=1, bc=cb\rangle \simeq \mathbb{Z}_{p}\times \mathbb{Z}_{p}$, $K = \langle a \mid a^{q^{2}} =1\rangle \simeq \mathbb{Z}_{q^{2}}$ and the homomorphism $\phi: K \longrightarrow Aut(H)$ be defined as $\phi(a)(b) = a^{-1}ba = b^{m}c^{nD}, \phi(a)(c) = a^{-1}ca = b^{n}c^{m}$, where $m + n\sqrt{D} = \sigma^{\frac{p^{2}-1}{q}}$, $\sigma$ is a primitive root of Galois Field $GF(p^{2})$, $m, n, D\in GF(p)$, $n \ne 0$, $D$ is not a perfect square and $q$ divides $p+1$. Then $G\simeq H\rtimes_{\phi} K$. Now, let $(\alpha, \delta)\in P$. Then $\delta\in Aut(K)$ is defined by $\delta(a) = a^{s}$ and $\alpha\in Aut(H) \simeq GL(2,p)$ is given by $\alpha = \begin{pmatrix}
	i& l\\ j & k
	\end{pmatrix}$ such that $\alpha(b) = b^{i}c^{j}, \alpha(c) = b^{l}c^{k}$, where $1\le s \le q^{2}-1, \gcd(s,q) = 1$, $0\le i,j,l,k \le p-1$ and $ik\not\equiv jl \Mod{p}$. Since, $(\alpha, \delta)\in P$, $\alpha(b^{a}) = \alpha(b)^{\delta(a)}$ and $\alpha(c^{a}) = \alpha(c)^{\delta(a)}$.
	
	Now, $b^{mi+lnD}c^{jm+knD} = \alpha(b^{m}c^{nD}) = \alpha(b^{a}) = \alpha(b)^{\delta(a)} = (b^{i}c^{j})^{a^{s}} = b^{iM+ jN}c^{iDN + jM}$, and  $b^{in+lm}c^{jn+km} = \alpha(b^{n}c^{m}) = \alpha(c^{a}) = \alpha(c)^{\delta(a)} = (b^{l}c^{k})^{a^{s}} = b^{lM+ kN}c^{lDN + kM}$, where $M = \sum_{t=0}^{[\frac{n}{2}]}{{^{s}}C_{2t}m^{s-2t}n^{2t}D^{t}}$ and $N = \sum_{t=0}^{[\frac{n}{2}]}{^{s}C_{2t+1}m^{s-2t-1}n^{2t+1}D^{t}}$. Then
	\begin{align}
	mi+lnD &\equiv iM + jN \Mod{p}\label{e1}\\
	jm+knD &\equiv iDN+ jM\Mod{p} \label{e2}\\
	in+lm &\equiv lM+ kN \Mod{p} \label{e3}\\
	jn+km&\equiv lDN + kM \Mod{p} \label{e4}.	
	\end{align}
	Note that $(m+n\sqrt{D})^{s} = M + \sqrt{D}N$ and so, $M = (m+n\sqrt{D})^{s}- \sqrt{D}N$. Substituting the value of $M$ in the Congruence relations (\ref{e1})-(\ref{e4}), we get   
	\begin{align}
	mi+lnD &\equiv i(m+n\sqrt{D})^{s} + N(j- i\sqrt{D}) \Mod{p}\label{e5}\\
	jm+knD &\equiv j(m+n\sqrt{D})^{s} -N\sqrt{D}(j-i\sqrt{D})\Mod{p} \label{e6}\\
	in+lm &\equiv l(m+n\sqrt{D})^{s} + N(k - l\sqrt{D}) \Mod{p} \label{e7}\\
	jn+km&\equiv  k(m+n\sqrt{D})^{s} - N\sqrt{D}(k-l\sqrt{D})\Mod{p} \label{e8}.	
	\end{align}
	Now, applying $(\ref{e6})+ \sqrt{D}(\ref{e5})$, and $(\ref{e8})+ \sqrt{D}(\ref{e7})$ we get,
	\begin{align}
	m(j+i\sqrt{D}) + nD(k+l\sqrt{D}) &\equiv (j+i\sqrt{D})(m+n\sqrt{D})^{s} \Mod{p} \label{e9}\\
	n(j+i\sqrt{D}) + m(k+l\sqrt{D}) &\equiv (k+l\sqrt{D})(m+n\sqrt{D})^{s} \Mod{p} \label{e10}.
	\end{align}
	Applying $(j+i\sqrt{D})\times (\ref{e10}) - (k+l\sqrt{D})\times (\ref{e9})$, we get
	\begin{align}
	0 &\equiv n((j+i\sqrt{D})^{2} - D(k+l\sqrt{D})^{2})(m+n\sqrt{D})^{s} \Mod{p}\\
	&\equiv (j^{2}+ Di^{2}- Dk^{2} - D^{2}l^{2}) + \sqrt{D}(2ij- 2klD)\Mod{p}.
	\end{align}
	This implies that $j^{2}+ Di^{2}- Dk^{2} - D^{2}l^{2} \equiv 0\Mod{p}$ and $2ij- 2klD \equiv 0\Mod{p}$. So, letting $k = ri$, $j = rlD$ for some $r\in GF(p)$, we get $r^{2}l^{2}D^{2} + Di^{2} - Dr^{2}i^{2} - D^{2}l^{2}\equiv 0\Mod{p}$. Then, $(r^{2}-1)(l^{2}D^{2}-Di^{2})\equiv 0\Mod{p}$. Note that, $l^{2}D^{2}-Di^{2} = -D(i+l\sqrt{D})(i-l\sqrt{D})$. So, if $l^{2}D^{2}-Di^{2} \equiv 0 \Mod{p}$, then either $i+l\sqrt{D} \equiv 0\Mod{p}$ or  $i-l\sqrt{D} \equiv 0\Mod{p}$, but this is not possible. So, $r^{2}\equiv 1\Mod{p}$ which implies $r\equiv \pm 1\Mod{p}$. Therefore, $k = \pm i$, $j = \pm lD$. 
	
	Now, applying $m\times (\ref{e10}) - n\times (\ref{e9})$, we get $$(k+l\sqrt{D})(m^{2}- Dn^{2}) \equiv (m+n\sqrt{D})^{s}(m(k+l\sqrt{D}) - n(j+i\sqrt{D})\Mod{p}.$$ Then, $(\pm i + l\sqrt{D})(m^{2}- Dn^{2}) \equiv (m+n\sqrt{D})^{s}(m(\pm i+l\sqrt{D}) - n(\pm lD+i\sqrt{D}))\Mod{p}$ which implies $m \pm n\sqrt{D} \equiv (m+n\sqrt{D})^{s} \Mod{p}$. Since, $m \pm n\sqrt{D} \in \langle m+n\sqrt{D}\rangle$, $m^{2}-Dn^{2}\in \langle m+n\sqrt{D}\rangle$. But, one can easily observe that $1\in \langle m+n\sqrt{D}\rangle$ is the only element without involving $\sqrt{D}$, as $n\ne 0$. So, $m^{2}-Dn^{2} = 1$ and $(m+n\sqrt{D})^{-1} = m-n\sqrt{D}$. Thus, $(m+n\sqrt{D})^{s} \equiv (m+n\sqrt{D})^{\pm 1} \Mod{p}$ and so, $s \equiv \pm 1\Mod{q}$. Hence, $\alpha=  \begin{pmatrix}
	i& l\\ \pm lD & \pm i
	\end{pmatrix}$ and $\delta(a) = a^{s}$, where $1\le l,j \le p-1$ and $s \equiv \pm 1\Mod{q}$ and so,  $R \simeq \langle \left(\begin{pmatrix}
	i & l\\ ld & i
	\end{pmatrix}, \delta_{s}\right), \left(\begin{pmatrix}
	i & l\\ -ld & -i
	\end{pmatrix}, \delta_{-s}\right)\rangle \simeq (\mathbb{Z}_{p^{2}-1}\times \mathbb{Z}_{q})\rtimes \mathbb{Z}_{2}$, where $\delta_{s}(a) = a^{s}$ and $s\equiv \pm1 \Mod{p}$. 
	
	Now. let $\beta\in S$ be defined by $\beta(a) = b^{\lambda}c^{\rho}$, where $0\le \lambda, \rho \le p-1$. Then for any $0\le l\le q^{2}-1$, we get
	\begin{align*}
	\beta(a^{l}) = b^{\lambda X_{l} + \rho Y_{l}}c^{\rho X_{l} + \lambda DY_{l}},
	\end{align*}
	where $$X_{l} = \sum_{u=0}^{[\frac{l-1}{2}]}{n^{2u}D^{u}\sum_{v=2u}^{l-1}{^{v}C_{2u}{m^{v-2u}}}}\; \text{and}\; Y_{l} = \sum_{u=1}^{[\frac{l-1}{2}]}{n^{2u-1}D^{u-1}\sum_{v=2u-1}^{l-1}{^{v}C_{2u-1}{m^{v-2u+1}}}}.$$
	Now, $1 = \beta(a^{q^{2}}) = b^{\lambda X_{q^{2}} + \rho Y_{q^{2}}}c^{\rho X_{q^{2}} + \lambda DY_{q^{2}}}$. Then $\lambda X_{q^{2}} + \rho Y_{q^{2}} \equiv 0\Mod{p}$ and $\rho X_{q^{2}} + \lambda DY_{q^{2}} \equiv 0\Mod{p}$. Therefore, $(\rho^{2}-D\lambda^{2})(Y_{q^{2}})\equiv 0\Mod{p}$ which implies that $Y_{q^{2}}\equiv 0 \Mod{p}$. Similarly, $X_{q^{2}} \equiv 0\Mod{p}$ and so, $\beta(a^{q^{2}}) = 1$. Thus $Q\simeq \mathbb{Z}_{p}\times \mathbb{Z}_{p}$. Hence, using the Theorem \ref{main}, $Aut(G)\simeq (\mathbb{Z}_{p}\times \mathbb{Z}_{p})\rtimes ((\mathbb{Z}_{p^{2}-1}\times \mathbb{Z}_{q})\rtimes \mathbb{Z}_{2})$. \label{g30}
	
		\subsection{Type 31.}  Let $H = \langle b, c\mid b^{p} = c^{p}=1, bc=cb\rangle \simeq \mathbb{Z}_{p}\times \mathbb{Z}_{p}$, $K = \langle a \mid a^{4} =1\rangle \simeq \mathbb{Z}_{4}$ and the homomorphism $\phi: K \longrightarrow Aut(H)$ be defined as $\phi(a)(b) = a^{-1}ba = b^{m}c^{nD}, \phi(a)(c) = a^{-1}ca = b^{n}c^{m}$, where $m + n\sqrt{D} = \sigma^{\frac{p^{2}-1}{4}}$, $\sigma$ is a primitive root of Galois Field $GF(p^{2})$, $m, n, D\in GF(p)$, $n \ne 0$, $D$ is not a perfect square and $p \equiv 3\Mod{4}$. Then $G\simeq H\rtimes_{\phi} K$. Note that, $(m+n\sqrt{D})^{2} \equiv -1\Mod{p}$ which implies that $m^{2}+n^{2}D+ 2mn\sqrt{D}\equiv -1 \Mod{p}$. Therefore, $m^{2}+n^{2}D = -1$ and $2mn = 0$. Since $n\ne 0$, $m = 0$ and $n^{2}D = -1$. Thus, $b^{a} = c^{nD}$ and $c^{a} = b^{n}$. Now, let $(\alpha, \delta)\in P$. Then $\delta\in Aut(K)$ is defined by $\delta(a) = a^{s}$ and $\alpha\in Aut(H) \simeq GL(2,p)$ is given by $\alpha = \begin{pmatrix}
	i& l\\ j & k
	\end{pmatrix}$ such that $\alpha(b) = b^{i}c^{j}, \alpha(c) = b^{l}c^{k}$, where $s\in \{1,3\}$, $0\le i,j,l,k \le p-1$ and $ik\not\equiv jl \Mod{p}$. Since, $(\alpha, \delta)\in P$, $\alpha(b^{a}) = \alpha(b)^{\delta(a)}$ and $\alpha(c^{a}) = \alpha(c)^{\delta(a)}$.
	
	Now, $\alpha(b^{a}) =  \alpha(c^{nD}) = b^{lnD}c^{knD}$ and $\alpha(c^{a}) = \alpha(b^{n}) = b^{in}c^{jn}$, $\alpha(b)^{\delta(a)} = (b^{i}c^{j})^{a^s} = \left\{\begin{array}{ll}
	b^{jn}c^{inD}& \text{if} \; s = 1\\
	b^{-jn}c^{-inD}& \text{if} \; s = 3
	\end{array} \right.$ and $\alpha(c)^{\delta(a)} = (b^{l}c^{k})^{a^s} = \left\{\begin{array}{ll}
	b^{kn}c^{lnD}& \text{if} \; s = 1\\
	b^{-kn}c^{-lnD}& \text{if} \; s = 3
	\end{array} \right.$. 
	
	If $s= 1$, then $j = lD$ and $k = i$, and if $s=3$, then $j= -lD$ and $k= -i$. Thus, by the similar argument in above part, we get $R \simeq \mathbb{Z}_{p^{2}-1}\rtimes \mathbb{Z}_{2}$ and $Q\simeq \mathbb{Z}_{p}\times \mathbb{Z}_{p}$. Hence, using the Theorem \ref{main}, $Aut(G)\simeq (\mathbb{Z}_{p}\times \mathbb{Z}_{p})\rtimes (\mathbb{Z}_{p^{2}-1}\rtimes \mathbb{Z}_{2})$. \label{g31}
	
		\subsection{Type 32.} Let $H = \langle b, c\mid b^{p} = c^{p}=1, bc=cb\rangle \simeq \mathbb{Z}_{p}\times \mathbb{Z}_{p}$, $K = \langle a \mid a^{q^2} =1\rangle \simeq \mathbb{Z}_{q^{2}}$ and the homomorphism $\phi: K \longrightarrow Aut(H)$ be defined as $\phi(a)(b) = a^{-1}ba = b^{m}c^{nD}, \phi(a)(c) = a^{-1}ca = b^{n}c^{m}$, where $m + n\sqrt{D} = \sigma^{\frac{p^{2}-1}{q^{2}}}$, $\sigma$ is a primitive root of Galois field $GF(p^{2})$, $m, n, D\in GF(p)$, $n \ne 0$, $D$ is not a perfect square and $p \equiv 3\Mod{4}$. Then $G\simeq H\rtimes_{\phi} K$.
Using the similar argument as in \ref{g31}, we get $R \simeq \mathbb{Z}_{p^{2}-1}\rtimes \mathbb{Z}_{2}$ and $Q\simeq \mathbb{Z}_{p}\times \mathbb{Z}_{p}$. Hence, using the Theorem \ref{main}, $Aut(G)\simeq (\mathbb{Z}_{p}\times \mathbb{Z}_{p})\rtimes (\mathbb{Z}_{p^{2}-1}\rtimes \mathbb{Z}_{2})$. \label{g32}

	\subsection{Type 33.}  Let $H = \langle c, d\mid c^{p} = d^{p}=1, cd=dc\rangle \simeq \mathbb{Z}_{p}\times \mathbb{Z}_{p}$, $K = \langle a, b\mid a^{q} = b^{q}=1, ab=ba\rangle \simeq \mathbb{Z}_{q}\times \mathbb{Z}_{q}$ and the homomorphism $\phi: K \longrightarrow Aut(H)$ be defined as $\phi(a)(c) = a^{-1}ca = c, \phi(a)(d) = a^{-1}da = d, \phi(b)(c) = b^{-1}cb = c^{r}, \phi(b)(d) = b^{-1}db = d^{r}$, where $r^{q}\equiv 1 \Mod{p}$. Then $G\simeq H\rtimes_{\phi} K$. Now, let $(\alpha, \delta)\in P$. Then $\alpha\in Aut(H) \simeq GL(2,p)$ is given by $\alpha = \begin{pmatrix}
i& l\\ j & k
\end{pmatrix}$ and $\delta\in Aut(K) \simeq GL(2,q)$ is given by $\delta = \begin{pmatrix}
m& s\\ n & t
\end{pmatrix}$ such that $\alpha(c) = c^{i}d^{j}, \alpha(d) = c^{l}d^{k}, \delta(a) = a^{m}b^{n}, \delta(b) = a^{s}b^{t}$,  where $0\le i,j,l,k \le p-1$ and $ik\not\equiv jl \Mod{p}$, and $0\le m,n,s,t \le q-1$ and $mt\not\equiv ns \Mod{q}$. Since, $(\alpha, \delta)\in P$, $\alpha(c^{a}) = \alpha(c)^{\delta(a)}$, $\alpha(d^{a}) = \alpha(d)^{\delta(a)}$, $\alpha(c^{b}) = \alpha(c)^{\delta(b)}$ and $\alpha(d^{b}) = \alpha(d)^{\delta(b)}$.

Now, $c^{i}d^{j} = \alpha(c) = \alpha(c^{a}) = \alpha(c)^{\delta(a)} = (c^{i}d^{j})^{a^{m}b^{n}} = c^{ir^{n}}d^{jr^{n}}$, $c^{l}d^{k} = \alpha(d) = \alpha(d^{a}) = \alpha(d)^{\delta(a)} = (c^{l}d^{k})^{a^{m}b^{n}} = c^{lr^{n}}d^{kr^{n}}$, $c^{ir}d^{jr} = \alpha(c^{r}) = \alpha(c^{b}) = \alpha(c)^{\delta(b)} = (c^{i}d^{j})^{a^{s}b^{t}} = c^{ir^{t}}d^{jr^{t}}$ and $c^{lr}d^{kr} = \alpha(d^{r}) = \alpha(d^{b}) = \alpha(d)^{\delta(b)} = (c^{l}d^{k})^{a^{s}b^{t}} = c^{lr^{t}}d^{kr^{t}}$. Then $i\equiv ir^{n}\Mod{p}$, $j\equiv jr^{n}\Mod{p}$, $l\equiv lr^{n}\Mod{p}$, $k\equiv kr^{n}\Mod{p}$, $ir\equiv ir^{t}\Mod{p}$, $jr\equiv jr^{t}\Mod{p}$, $lr\equiv lr^{t}\Mod{p}$ and $kr\equiv kr^{t}\Mod{p}$. Since all $i,j,k,l$ can not vanish together, let $i\ne 0$. Then $r^{n}\equiv 1\Mod{p}$ and $r^{t}\equiv r\Mod{p}$ which implies that $n\equiv 0\Mod{q}$ and $t\equiv 1 \Mod{q}$. Thus, $\alpha = \begin{pmatrix}
i& l\\ j & k
\end{pmatrix}$ and $\delta = \begin{pmatrix}
m& 0\\ s & 1
\end{pmatrix}$, where $0\le i,j,k,l \le p-1$, $ik\not\equiv jl \Mod{p}$, $1\le m\le q-1$ and $0\le s\le q-1$. Therefore, $R\simeq GL(2,p)\times (\mathbb{Z}_{q}\rtimes \mathbb{Z}_{q-1})$.

Now, let $\beta\in S$ be defined by $\beta(a) = c^{\lambda}d^{\mu}$ and $\beta(b) = c^{\rho}d^{\nu}$, where $0\le \lambda, \mu, \rho, \nu\le p-1$. Now for any $0\le u\le q-1$, we have $\beta(a^{u}) = c^{u\lambda}d^{u\mu}$ and $\beta(b^{u}) = c^{\lambda \frac{r^{u}-1}{r-1}}d^{\mu\frac{r^{u}-1}{r-1}}$. Then clearly, $\beta(a^{q}) = c^{q\lambda}d^{q\mu}$ and $\beta(b^{q}) = c^{\lambda \frac{r^{q}-1}{r-1}}d^{\mu\frac{r^{q}-1}{r-1}} = 1$. Since $\beta(a^{q}) = 1$, $q\lambda, q\mu \equiv 0 \Mod{p}$ which implies that $\lambda, \mu \equiv 0\Mod{p}$. Thus, $\beta(a) = 1$ and $\beta(b) = c^{\rho}d^{\nu}$, where $0\le \rho, \nu\le p-1$ and so, $Q\simeq \mathbb{Z}_{p}\times \mathbb{Z}_{p}$. Hence, using the Theorem \ref{main}, $Aut(G)\simeq (\mathbb{Z}_{p}\times \mathbb{Z}_{p})\rtimes (GL(2,p)\times (\mathbb{Z}_{q}\rtimes \mathbb{Z}_{q-1}))$.  \label{g33}

	\subsection{Type 34.}  Let $H = \langle c, d\mid c^{p} = d^{p}=1, cd=dc\rangle \simeq \mathbb{Z}_{p}\times \mathbb{Z}_{p}$, $K = \langle a, b\mid a^{2} = b^{2}=1, ab=ba\rangle \simeq \mathbb{Z}_{2}\times \mathbb{Z}_{2}$ and the homomorphism $\phi: K \longrightarrow Aut(H)$ be defined as $\phi(a)(c) =  a^{-1}ca = c, \phi(a)(d) = a^{-1}da = d, \phi(b)(c) = b^{-1}cb = c^{-1}, \phi(b)(d) = b^{-1}db = d$. Then $G\simeq H\rtimes_{\phi} K$. Now, let $(\alpha, \delta)\in P$. Then $\alpha\in Aut(H) \simeq GL(2,p)$ is given by $\alpha = \begin{pmatrix}
i& l\\ j & k
\end{pmatrix}$ and $\delta\in Aut(K) \simeq GL(2,q)$ is given by $\delta = \begin{pmatrix}
m& s\\ n & t
\end{pmatrix}$ such that $\alpha(c) = c^{i}d^{j}, \alpha(d) = c^{l}d^{k}, \delta(a) = a^{m}b^{n}, \delta(b) = a^{s}b^{t}$,  where $0\le i,j,k,l \le p-1$, $ik\not\equiv jl \Mod{p}$, $0\le m,n,s,t \le 1$ and $mt\not\equiv ns \Mod{2}$. Since, $(\alpha, \delta)\in P$, $\alpha(c^{a}) = \alpha(c)^{\delta(a)}$, $\alpha(d^{a}) = \alpha(d)^{\delta(a)}$, $\alpha(c^{b}) = \alpha(c)^{\delta(b)}$ and $\alpha(d^{b}) = \alpha(d)^{\delta(b)}$.

Now, $c^{i}d^{j} = \alpha(c) = \alpha(c^{a}) = \alpha(c)^{\delta(a)} = (c^{i}d^{j})^{a^{m}b^{n}} = c^{i(-1)^{n}}d^{j}$, $c^{l}d^{k} = \alpha(d) = \alpha(d^{a}) = \alpha(d)^{\delta(a)} = (c^{l}d^{k})^{a^{m}b^{n}} = c^{l(-1)^{n}}d^{k}$, $c^{-i}d^{-j} = \alpha(c^{-1}) = \alpha(c^{b}) = \alpha(c)^{\delta(b)} = (c^{i}d^{j})^{a^{s}b^{t}} = c^{i(-1)^{t}}d^{j}$ and $c^{l}d^{k} = \alpha(d) = \alpha(d^{b}) = \alpha(d)^{\delta(b)} = (c^{l}d^{k})^{a^{s}b^{t}} = c^{l(-1)^{t}}d^{k}$. Then $i\equiv i(-1)^{n}\Mod{p}$, $l\equiv l(-1)^{n}\Mod{p}$, $-i\equiv i(-1)^{t}\Mod{p}$, $-j\equiv j\Mod{p}$, $l\equiv l(-1)^{t}\Mod{p}$. Clearly, $j=0$. So, $i\ne 0$. Therefore, $(-1)^{n}\equiv 1 \Mod{p}$ and $(-1)^{t}\equiv -1\Mod{p}$ which implies that $ n =0 $ and $t= 1$ and so, $l= 0$ and $m = 1$. Thus, $\alpha = \begin{pmatrix}
i& 0\\ 0 & k
\end{pmatrix}$ and $\delta = \begin{pmatrix}
1& s\\ 0 & 1
\end{pmatrix}$, where $1\le i,k \le p-1$ and $s\in\{0,1\}$. Therefore, $R\simeq (\mathbb{Z}_{p-1}\times \mathbb{Z}_{p-1})\times \mathbb{Z}_{2}$.

Now, let $\beta\in S$ be defined by $\beta(a) = c^{\lambda}d^{\mu}$ and $\beta(b) = c^{\rho}d^{\nu}$, where $0\le \lambda, \mu, \rho, \nu\le p-1$. Then $\beta(a^{2}) = c^{2\lambda}d^{2\mu}$. Since, $\beta(a^{2}) = 1$, $\lambda, \mu \equiv 0\Mod{p}$. Also, $\beta(b^{2}) = d^{2\nu}$, and $\beta(b^{2}) = 1$. So, $\nu \equiv 0\Mod{p}$. Thus, $\beta(a) = 1$ and $\beta(b) = c^{\rho}$, where $0\le \rho \le p-1$ and so, $Q\simeq \mathbb{Z}_{p}$. Hence, using the Theorem \ref{main}, $Aut(G)\simeq \mathbb{Z}_{p}\rtimes ((\mathbb{Z}_{p-1}\times \mathbb{Z}_{p-1})\times \mathbb{Z}_{2})$.  \label{g34}

	\subsection{Type 35.}   Let $H = \langle c, d\mid c^{p} = d^{p}=1, cd=dc\rangle \simeq \mathbb{Z}_{p}\times \mathbb{Z}_{p}$, $K = \langle a, b\mid a^{q} = b^{q}=1, ab=ba\rangle \simeq \mathbb{Z}_{q}\times \mathbb{Z}_{q}$ and the homomorphism $\phi: K \longrightarrow Aut(H)$ be defined as $\phi(a)(c) = a^{-1}ca = c^{r}, \phi(a)(d) = a^{-1}da = d, \phi(b)(c) = b^{-1}cb = c, \phi(b)(d) = b^{-1}db = d^{r}$, where $r^{q}\equiv 1 \Mod{p}$. Then $G\simeq H\rtimes_{\phi} K$. Now, let $(\alpha, \delta)\in P$. Then $\alpha\in Aut(H) \simeq GL(2,p)$ is given by $\alpha = \begin{pmatrix}
i& l\\ j & k
\end{pmatrix}$ and $\delta\in Aut(K) \simeq GL(2,q)$ is given by $\delta = \begin{pmatrix}
m& s\\ n & t
\end{pmatrix}$ such that $\alpha(c) = c^{i}d^{j}, \alpha(d) = c^{l}d^{k}, \delta(a) = a^{m}b^{n}, \delta(b) = a^{s}b^{t}$,  where $0\le i,j,k,l \le p-1$, $ik\not\equiv jl \Mod{p}$, $0\le m,n,s,t \le 1$ and $mt\not\equiv ns \Mod{2}$. Since, $(\alpha, \delta)\in P$, $\alpha(c^{a}) = \alpha(c)^{\delta(a)}$, $\alpha(d^{a}) = \alpha(d)^{\delta(a)}$, $\alpha(c^{b}) = \alpha(c)^{\delta(b)}$ and $\alpha(d^{b}) = \alpha(d)^{\delta(b)}$.

Now, $c^{ri}d^{rj} = \alpha(c^{r}) = \alpha(c^{a}) = \alpha(c)^{\delta(a)} = (c^{i}d^{j})^{a^{m}b^{n}} = c^{ir^{m}}d^{jr^{n}}$, $c^{l}d^{k} = \alpha(d) = \alpha(d^{a}) = \alpha(d)^{\delta(a)} = (c^{l}d^{k})^{a^{m}b^{n}} = c^{lr^{m}}d^{kr^{n}}$, $c^{i}d^{j} = \alpha(c) = \alpha(c^{b}) = \alpha(c)^{\delta(b)} = (c^{i}d^{j})^{a^{s}b^{t}} = c^{ir^{s}}d^{jr^{t}}$ and $c^{lr}d^{kr} = \alpha(d^{r}) = \alpha(d^{b}) = \alpha(d)^{\delta(b)} = (c^{l}d^{k})^{a^{s}b^{t}} = c^{lr^{s}}d^{kr^{t}}$. Then $ri\equiv ir^{m}\Mod{p}$, $rj\equiv jr^{n}\Mod{p}$, $l\equiv lr^{m}\Mod{p}$, $k\equiv kr^{n}\Mod{p}$, $i\equiv ir^{s}\Mod{p}$, $j\equiv jr^{t}\Mod{p}$, $lr\equiv lr^{s}\Mod{p}$ and $kr\equiv kr^{t}\Mod{p}$. Since $i$ and $j$ both can not vanish together, we have two cases namely, $i\ne 0$ and $j\ne 0$. Also, we will observe that one of $i$ or $j$ must be zero.

If $i\ne 0$, then $m = 1$, $s = 0$, $l = 0$. Since, $l=0$, $k\ne 0$ and so, $n = 0$ and $t = 1$ which implies that $j=0$. Thus in this case, $\alpha = \begin{pmatrix}
i& 0\\ 0 & k
\end{pmatrix}$ and $\delta = \begin{pmatrix}
1& 0\\ 0 & 1
\end{pmatrix}$.  Now, if $j\ne 0$, then $n=1$, $t=0$ and so, $k=0$. Since, $k=0$, $l\ne 0$ and so, $s=1$, $m=0$ which implies that $i=0$. Thus in this case, $\alpha = \begin{pmatrix}
0& l\\ j & 0
\end{pmatrix}$ and $\delta = \begin{pmatrix}
0& 1\\ 1 & 0
\end{pmatrix}$. Therefore, $R\simeq \langle \left(\begin{pmatrix}
i& 0\\ 0 & k
\end{pmatrix}, \begin{pmatrix}
	1& 0\\ 0 & 1
\end{pmatrix}\right), \left(\begin{pmatrix}
0& l\\ j & 0
\end{pmatrix}, \begin{pmatrix}
0& 1\\ 1& 0
\end{pmatrix}\right)\rangle \simeq (\mathbb{Z}_{p-1}\times \mathbb{Z}_{p-1})\rtimes \mathbb{Z}_{2}$.

Now, let $\beta\in S$ be defined by $\beta(a) = c^{\lambda}d^{\mu}$ and $\beta(b) = c^{\rho}d^{\nu}$, where $0\le \lambda, \mu, \rho, \nu\le p-1$. Now for any $0\le u\le q-1$, we have $\beta(a^{u}) = c^{\lambda\frac{r^{u}-1}{r-1}}d^{u\mu}$ and $\beta(b^{u}) = c^{u\rho}d^{\nu\frac{r^{u}-1}{r-1}}$. Then clearly, $\beta(a^{q}) = d^{q\mu}$ and $\beta(b^{q}) = c^{q\rho}$. Since $\beta(a^{q}) = 1 = \beta(b^{q})$, $q\rho, q\mu \equiv 0 \Mod{p}$ which implies that $\rho, \mu \equiv 0\Mod{p}$. Thus, $\beta(a) = c^{\lambda}$ and $\beta(b) = d^{\nu}$, where $0\le \lambda, \nu\le p-1$ and so, $Q\simeq \mathbb{Z}_{p}\times \mathbb{Z}_{p}$. Hence, using the Theorem \ref{main}, $Aut(G)\simeq (\mathbb{Z}_{p}\times \mathbb{Z}_{p})\rtimes ((\mathbb{Z}_{p-1}\times \mathbb{Z}_{p-1})\rtimes \mathbb{Z}_{2})$. \label{g35}

	\subsection{Type 36.}  Let $H = \langle c, d\mid c^{p} = d^{p}=1, cd=dc\rangle \simeq \mathbb{Z}_{p}\times \mathbb{Z}_{p}$, $K = \langle a, b\mid a^{q} = b^{q}=1, ab=ba\rangle \simeq \mathbb{Z}_{q}\times \mathbb{Z}_{q}$ and the homomorphism $\phi: K \longrightarrow Aut(H)$ be defined as $\phi(a)(c) =  a^{-1}ca = c, \phi(a)(d) = a^{-1}da = d, \phi(b)(c) = b^{-1}cb = c^{u}d^{vD}, \phi(b)(d) = b^{-1}db = c^{v}d^{u}$, where $u + v\sqrt{D} = \sigma^{\frac{p^{2}-1}{q}}$, $\sigma$ is a primitive root of Galois Field $GF(p^{2})$, $u, v, D\in GF(p)$, $v \ne 0$, $D$ is not a perfect square, $q$ divides $p+1$ and $p\not\equiv 1 \Mod{q}$. Then $G\simeq H\rtimes_{\phi} K$. Now, let $(\alpha, \delta)\in P$. Then $\alpha\in Aut(H) \simeq GL(2,p)$ is given by $\alpha = \begin{pmatrix}
i& l\\ j & k
\end{pmatrix}$ and $\delta\in Aut(K) \simeq GL(2,q)$ is given by $\delta = \begin{pmatrix}
m& s\\ n & t
\end{pmatrix}$ such that $\alpha(c) = c^{i}d^{j}, \alpha(d) = c^{l}d^{k}, \delta(a) = a^{m}b^{n}, \delta(b) = a^{s}b^{t}$,  where $0\le i,j,k,l \le p-1$, $ik\not\equiv jl \Mod{p}$, $0\le m,n,s,t \le 1$ and $mt\not\equiv ns \Mod{2}$. Since, $(\alpha, \delta)\in P$, $\alpha(c^{a}) = \alpha(c)^{\delta(a)}$, $\alpha(d^{a}) = \alpha(d)^{\delta(a)}$, $\alpha(c^{b}) = \alpha(c)^{\delta(b)}$ and $\alpha(d^{b}) = \alpha(d)^{\delta(b)}$.

Now, $c^{i}d^{j} = \alpha(c) = \alpha(c^{a}) = \alpha(c)^{\delta(a)} = (c^{i}d^{j})^{a^{m}b^{n}} = c^{iM+ jN}d^{iND+ jM}$, $ c^{l}d^{k} = \alpha(d) = \alpha(d^{a}) = \alpha(d)^{\delta(a)} = (c^{l}d^{k})^{a^{m}b^{n}} = c^{lM+ kN}d^{lND+ kM}$, $ c^{iu+lvD}d^{ju+kvD} = \alpha(c^{u}d^{vD}) = \alpha(c^{b})  = \alpha(c)^{\delta(b)} = (c^{i}d^{j})^{a^{s}b^{t}} = c^{iX+ jY}d^{iYD+ jX}$, and $c^{iv+lu}d^{jv+ku} = \alpha(c^{v}d^{u}) = \alpha(d^{b}) = \alpha(d)^{\delta(b)} = (c^{l}d^{k})^{a^{s}b^{t}} = c^{lX+ kY}d^{lYD+ kX}$, where $$M = \sum_{x=0}^{[\frac{n}{2}]}{{^{n}}C_{2x}u^{n-2x}v^{2x}D^{x}}, N = \sum_{x=0}^{[\frac{n}{2}]}{^{n}C_{2x+1}v^{n-2x-1}v^{2x+1}D^{x}},$$ $$X = \sum_{e=0}^{[\frac{t}{2}]}{{^{t}}C_{2e}u^{t-2e}v^{2e}D^{e}},\; \text{and}\; Y = \sum_{e=0}^{[\frac{t}{2}]}{^{t}C_{2e+1}m^{t-2e-1}n^{2e+1}D^{e}}.$$ Then 
	\begin{align}
	i\equiv iM+jN\Mod{p}\label{f1}\\
		j\equiv iND+jM\Mod{p}\label{f2}\\
			l\equiv lM+kN\Mod{p}\label{f3}\\
				k\equiv lND+kM\Mod{p}\label{f4}\\
iu+lvD \equiv iX + jY \Mod{p}\label{f5}\\
ju+kvD \equiv iYD+ jX\Mod{p} \label{f6}\\
iv+lu \equiv lX+ kY \Mod{p} \label{f7}\\
jv+ku\equiv lYD + kX \Mod{p} \label{f8}.	
\end{align}
Note that $(u+v\sqrt{D})^{n} = M + \sqrt{D}N$ and $(u+v\sqrt{D})^{t} = X + \sqrt{D}Y$. Now, substituting the value of $M$ and $X$ in the Congruence relations (\ref{f1})-(\ref{f8}), we get   
\begin{align}
i\equiv i(u+v\sqrt{D})^{n}+N(j-i\sqrt{D})\Mod{p}\label{f9}\\
j\equiv j(u+v\sqrt{D})^{n}-N\sqrt{D}(j-i\sqrt{D})\Mod{p}\label{f10}\\
l\equiv  l(u+v\sqrt{D})^{n}+N(k-l\sqrt{D})\Mod{p}\label{f11}\\
k\equiv  k(u+v\sqrt{D})^{n}-N\sqrt{D}(k-l\sqrt{D})\Mod{p}\label{f12}\\
iu+lvD \equiv i(u+v\sqrt{D})^{t}+Y(j-i\sqrt{D}) \Mod{p}\label{f13}\\
ju+kvD \equiv j(u+v\sqrt{D})^{t}-Y\sqrt{D}(j-i\sqrt{D})\Mod{p} \label{f14}\\
iv+lu \equiv l(u+v\sqrt{D})^{t}+Y(k-l\sqrt{D}) \Mod{p} \label{f15}\\
jv+ku\equiv k(u+v\sqrt{D})^{t}-Y\sqrt{D}(k-l\sqrt{D}) \Mod{p} \label{f16}.	
\end{align}
Solving the congruence relations (\ref{f9} - \ref{f16}) we get,
\begin{align}
j+i\sqrt{D} \equiv (j+i\sqrt{D})(u+v\sqrt{D})^{n}\Mod{p}\label{g1}\\
k+l\sqrt{D} \equiv (k+l\sqrt{D})(u+v\sqrt{D})^{n}\Mod{p}\label{g2}\\
u(j+i\sqrt{D}) + vD(k+l\sqrt{D}) \equiv (j+i\sqrt{D})(u+v\sqrt{D})^{t} \Mod{p} \label{g3}\\
v(j+i\sqrt{D}) + u(k+l\sqrt{D}) \equiv (k+l\sqrt{D})(u+v\sqrt{D})^{t} \Mod{p} \label{g4}.
\end{align}
From the congruence relations (\ref{g1}) and (\ref{g2}) we get, $(u+v\sqrt{D})^{n}\equiv 1 \Mod{p}$ which implies that $n \equiv 0\Mod{q}$. 

Applying $(j+i\sqrt{D})\times (\ref{g4}) - (k+l\sqrt{D})\times (\ref{g3})$, we get
\begin{align}
0 \equiv& v((j+i\sqrt{D})^{2} - D(k+l\sqrt{D})^{2})(u+v\sqrt{D})^{t} \Mod{p}\\
\equiv& (j^{2}+ Di^{2}- Dk^{2} - D^{2}l^{2}) + \sqrt{D}(2ij- 2lkD)\Mod{p}.
\end{align}
		This implies that $j^{2}+ Di^{2}- Dk^{2} - D^{2}l^{2} \equiv 0\Mod{p}$ and $2ij- 2lkD\equiv 0\Mod{p}$. So, letting $k = ri$, $j = rlD$ for some $r\in GF(p)$, we get $r^{2}l^{2}D^{2} + Di^{2} - Dr^{2}i^{2} - D^{2}l^{2}\equiv 0\Mod{p}$. Then, $(r^{2}-1)(l^{2}D^{2}-Di^{2})\equiv 0\Mod{p}$. Note that, $l^{2}D^{2}-Di^{2} = -D(i+l\sqrt{D})(i-l\sqrt{D})$. So, if $l^{2}D^{2}-Di^{2} \equiv 0 \Mod{p}$, then either $i+l\sqrt{D} \equiv 0\Mod{p}$ or  $i-l\sqrt{D} \equiv 0\Mod{p}$, but this is not possible. So, $r^{2}\equiv 1\Mod{p}$ which implies $r\equiv \pm 1\Mod{p}$. Therefore, $k = \pm i$, $j = \pm lD$. 
		
		Now, applying $u\times (\ref{g4}) - v\times (\ref{g3})$, we get $(k+l\sqrt{D})(u^{2}- Dv^{2}) \equiv (u+v\sqrt{D})^{t}(u(k+l\sqrt{D}) - v(j+i\sqrt{D}))\Mod{p}$. Then, $(\pm i + l\sqrt{D})(u^{2}- Dv^{2}) \equiv (u+v\sqrt{D})^{t}(u(\pm i+l\sqrt{D}) - v(\pm lD+i\sqrt{D}))\Mod{p}$ which implies $u \pm v\sqrt{D} \equiv (u+v\sqrt{D})^{t} \Mod{p}$. Since, $u \pm v\sqrt{D} \in \langle u+v\sqrt{D}\rangle$, $u^{2}-Dv^{2}\in \langle u+v\sqrt{D}\rangle$. But, one can easily observe that $1\in \langle u+v\sqrt{D}\rangle$ is the only element without involving $\sqrt{D}$, as $v\ne 0$. So, $u^{2}-Dv^{2} = 1$ and $(u+v\sqrt{D})^{-1} = u-v\sqrt{D}$. Thus, $(u+v\sqrt{D})^{t} \equiv (u+v\sqrt{D})^{\pm 1} \Mod{p}$ and so, $t \equiv \pm 1\Mod{q}$. Hence, $\alpha=  \begin{pmatrix}
		i& l\\ \pm lD & \pm i
		\end{pmatrix}$ and $\delta = \begin{pmatrix}
		m& s\\ 0 & \pm 1
		\end{pmatrix}$, where $1\le i,l\le p-1$, $1\le m\le q-1$ and $0\le s\le q-1$. Thus $R\simeq \mathbb{Z}_{2}\times (\mathbb{Z}_{p^{2}-1}\times (\mathbb{Z}_{q-1}\rtimes \mathbb{Z}_{q}))$.
		
		Now. let $\beta\in S$ be defined by $\beta(a) = c^{\lambda}d^{\mu}$, and $\beta(b) = c^{\rho}d^{\nu}$ where $0\le \lambda,\mu,\rho,\nu \le p-1$. Then for any $0\le w\le q-1$, we get
		\begin{align*}
		\beta(a^{w}) = c^{w\lambda}d^{w\mu}\; \text{and}\; \beta(b^{w}) = c^{\rho X_{w} + \nu Y_{w}}d^{\nu X_{w} + \rho DY_{w}},
		\end{align*}
		where $$X_{w} = \sum_{\theta=0}^{[\frac{w-1}{2}]}{v^{2\theta}D^{\theta}\sum_{z=2\theta}^{w-1}{^{z}C_{2\theta}{u^{z-2\theta}}}}\; \text{and}\; Y_{w} = \sum_{\theta=1}^{[\frac{w-1}{2}]}{v^{2\theta-1}D^{\theta-1}\sum_{z=2\theta-1}^{w-1}{^{z}C_{2\theta-1}{u^{z-2\theta+1}}}}.$$
		Now, $1 = \beta(a^{q}) = c^{q\lambda}d^{q\mu}$. Then $q\lambda, q\mu\equiv 0\Mod{p}$ which implies that $\lambda, \mu \equiv 0\Mod{p}$. Also, $1 = \beta(b^{q}) = c^{\rho X_{q} + \nu Y_{q}}d^{\nu X_{q} + \rho DY_{q}}$. Then $\rho X_{q} + \nu Y_{q} \equiv 0\Mod{p}$ and $\nu X_{q} + \rho DY_{q} \equiv 0\Mod{p}$ which implies that $(\nu^{2}-D\rho^{2})Y_{q}\equiv 0\Mod{p}$ and so, $Y_{q}\equiv 0 \Mod{p}$. Similarly, $X_{q}\equiv 0 \Mod{p}$. Thus $\beta(a) = 1$ and $\beta(b) = c^{\rho}d^{\nu}$ where $0\le \rho,\nu \le p-1$. Therefore, $Q\simeq \mathbb{Z}_{p}\times \mathbb{Z}_{p}$. Hence, using the Theorem \ref{main}, $Aut(G)\simeq (\mathbb{Z}_{p}\times \mathbb{Z}_{p})\rtimes ((\mathbb{Z}_{2}\times \mathbb{Z}_{p^{2}-1})\times (\mathbb{Z}_{q-1}\rtimes \mathbb{Z}_{q}))$. \label{g36}
		
\begin{sidewaystable}
	\centering
	\begin{tabular}{|c|c|c|c|}
		\hline
		Type & Conditions & Group Description $G$ & Structure of $Aut(G)$\\
		\hline
		\hline
		15 &  & $\mathbb{Z}_{p^{2}}\times \mathbb{Z}_{q^{2}}$ &  $\mathbb{Z}_{p(p-1)}\times \mathbb{Z}_{q(q-1)}$ \\
		\hline
		16 &  & $\mathbb{Z}_{q}\times \mathbb{Z}_{q}\times \mathbb{Z}_{p^{2}}$ & $GL(2,q)\times \mathbb{Z}_{p(p-1)}$  \\
		\hline
		17& & $\mathbb{Z}_{q^{2}} \times \mathbb{Z}_{p} \times \mathbb{Z}_{p}$ & $\mathbb{Z}_{q(q-1)} \times GL(2,p)$ \\
		\hline
		18& & $\mathbb{Z}_{q}\times \mathbb{Z}_{q}\times \mathbb{Z}_{p} \times \mathbb{Z}_{p}$ & $GL(2,q) \times GL(2,p)$  \\
		\hline
		19& $q|p-1$ & $\mathbb{Z}_{p^{2}}\rtimes \mathbb{Z}_{q^{2}}$ & $\mathbb{Z}_{p^{2}}\rtimes (\mathbb{Z}{p(p-1)}\times \mathbb{Z}_{q})$   \\
		\hline
		20&$q^{2}|p-1$ &  $\mathbb{Z}_{p^{2}}\rtimes \mathbb{Z}_{q^{2}}$ & $\mathbb{Z}_{p^{2}}\rtimes \mathbb{Z}{p(p-1)}$ \\
		\hline
		21&$q|p-1$ & $\mathbb{Z}_{p^{2}}\rtimes (\mathbb{Z}_{q}\times \mathbb{Z}_{q})$ & $\mathbb{Z}_{p^{2}}\rtimes (\mathbb{Z}_{p(p-1)}\times (\mathbb{Z}_{q}\rtimes \mathbb{Z}_{q-1}))$ \\
		\hline
		22& $q|p-1$ & $(\mathbb{Z}_{p}\times \mathbb{Z}_{p})\rtimes_{\phi_{r}} \mathbb{Z}_{q^{2}}$ & $(\mathbb{Z}_{p}\times \mathbb{Z}_{p})\rtimes (GL(2,p)\times \mathbb{Z}_{q})$ \\
		\hline
		23& $q=2$ & $(\mathbb{Z}_{p}\times \mathbb{Z}_{p})\rtimes \mathbb{Z}_{4}$ & $\mathbb{Z}_{p}\rtimes ((\mathbb{Z}_{p-1}\times \mathbb{Z}_{p-1})\times \mathbb{Z}_{2})$ \\
		\hline
		24& $q|p-1$, $q\ne 2$ & $\mathbb{Z}_{p}\rtimes (\mathbb{Z}_{p}\times \mathbb{Z}_{q^{2}}$ & $\mathbb{Z}_{p}\rtimes ((\mathbb{Z}_{p-1}\times \mathbb{Z}_{p-1})\times \mathbb{Z}_{q})$ \\
		\hline
		25& $q|p-1$ & $(\mathbb{Z}_{p}\times \mathbb{Z}_{p})\rtimes_{\phi_{r^{-1}}} \mathbb{Z}_{q^{2}}$ & $(\mathbb{Z}_{p}\times \mathbb{Z}_{p})\rtimes (((\mathbb{Z}_{p-1}\times \mathbb{Z}_{p-1})\times \mathbb{Z}_{q}) \rtimes \mathbb{Z}_{2})$ \\
		\hline
		26& $q^{2}|p-1$& $(\mathbb{Z}_{p}\times \mathbb{Z}_{p})\rtimes_{\phi_{r^{-1}}} \mathbb{Z}_{q^{2}}$ & $(\mathbb{Z}_{p}\times \mathbb{Z}_{p})\rtimes ((\mathbb{Z}_{p-1}\times \mathbb{Z}_{p-1}) \rtimes \mathbb{Z}_{2})$ \\
		\hline
		27& $q|p-1$ & $(\mathbb{Z}_{p}\times \mathbb{Z}_{p})\rtimes_{\phi_{r^{n}}} \mathbb{Z}_{q^{2}}$ & $(\mathbb{Z}_{p}\times \mathbb{Z}_{p})\rtimes ((\mathbb{Z}_{p-1}\times \mathbb{Z}_{p-1}) \rtimes \mathbb{Z}_{q})$ \\
		\hline
		28& $q^{2}|p-1$ & $(\mathbb{Z}_{p}\times \mathbb{Z}_{p})\rtimes_{\phi_{r^{n}}} \mathbb{Z}_{q^{2}}$ & $(\mathbb{Z}_{p}\times \mathbb{Z}_{p})\rtimes (\mathbb{Z}_{p-1}\times \mathbb{Z}_{p-1}) $ \\
		\hline
		29&  $q^{2}|p-1$ & $(\mathbb{Z}_{p}\times \mathbb{Z}_{p})\rtimes_{\phi} \mathbb{Z}_{q^{2}}$ & $(\mathbb{Z}_{p}\times \mathbb{Z}_{p})\rtimes GL(2,p)$  \\
		\hline
		30& $q|p+1$ & $(\mathbb{Z}_{p}\times \mathbb{Z}_{p})\rtimes \mathbb{Z}_{q^{2}}$ & $(\mathbb{Z}_{p}\times \mathbb{Z}_{p})\rtimes ((\mathbb{Z}_{p^{2}-1}\times \mathbb{Z}_{q})\rtimes \mathbb{Z}_{2}) $ \\
		\hline
		31& $p\equiv 3\Mod{4}$ & $(\mathbb{Z}_{p}\times \mathbb{Z}_{p})\rtimes \mathbb{Z}_{4}$ & $(\mathbb{Z}_{p}\times \mathbb{Z}_{p})\rtimes (\mathbb{Z}_{p^{2}-1}\rtimes \mathbb{Z}_{2}) $ \\
		\hline
		32& $q^{2}| p+1$ & $(\mathbb{Z}_{p}\times \mathbb{Z}_{p})\rtimes \mathbb{Z}_{q^{2}}$ & $(\mathbb{Z}_{p}\times \mathbb{Z}_{p})\rtimes (\mathbb{Z}_{p^{2}-1}\rtimes \mathbb{Z}_{2}) $ \\
		\hline
		33& $q|p-1$ & $(\mathbb{Z}_{p}\times \mathbb{Z}_{p}\times \mathbb{Z}_{q})\rtimes \mathbb{Z}_{q}$ & $(\mathbb{Z}_{p}\times \mathbb{Z}_{p})\rtimes (GL(2,p)\times (\mathbb{Z}_{q}\rtimes \mathbb{Z}_{q-1}))$\\
		\hline
		34& & $(\mathbb{Z}_{p}\times \mathbb{Z}_{p})\rtimes (\mathbb{Z}_{2}\times \mathbb{Z}_{2})$ & $\mathbb{Z}_{p}\rtimes ((\mathbb{Z}_{p-1}\times \mathbb{Z}_{p-1})\times \mathbb{Z}_{2})$ \\
		\hline
		35 & $q|p-1$ & $(\mathbb{Z}_{p}\times \mathbb{Z}_{p})\rtimes (\mathbb{Z}_{q}\times \mathbb{Z}_{q})$ & $(\mathbb{Z}_{p} \times \mathbb{Z}_{p})\rtimes ((\mathbb{Z}_{p-1}\times \mathbb{Z}_{p-1})\times \mathbb{Z}_{2})$ \\
		\hline
		36 & $q|p+1$, $p\not\equiv 1\Mod{q}$& $(\mathbb{Z}_{p}\times \mathbb{Z}_{p})\rtimes (\mathbb{Z}_{q}\times \mathbb{Z}_{q})$ & $(\mathbb{Z}_{p}\times \mathbb{Z}_{p})\rtimes ((\mathbb{Z}_{2}\times \mathbb{Z}_{p^{2}-1})\times (\mathbb{Z}_{q-1}\rtimes \mathbb{Z}_{q}))$ \\
		\hline
	\end{tabular}
\caption{ Structure of $Aut(G)$ of all groups $G$ of order $p^{2}q^{2}$} such that $pq\ne 6$.
\end{sidewaystable}
\vspace{.3cm}
\textbf{Acknowledgment} The first author is supported by the Senior Research Fellowship of UGC, India.

\end{document}